\newcommand{\todo}[2][yo]{}
\tikzstyle{vertex}=[circle,draw=black,fill=black,inner sep=0mm,minimum size=1mm]
\tikzstyle{ar}=[<=1pt,>=stealth',semithick]
\begin{document}

\title{Conjugacy and Iteration of Standard Interval Rank in \\ Finite Ordered Sets\thanks{ PNNL-SA-105143.}}
\date{}

\author{
	Cliff Joslyn\thanks{National Security Directorate, Pacific
Northwest National Laboratory. Corresponding author, cliff.joslyn@pnnl.gov,
206-552-0351.},
	Emilie Hogan\thanks{Fundamental Sciences Directorate, Pacific
Northwest National Laboratory}, and
	Alex Pogel\thanks{Physical Science Laboratory, New Mexico State University}
	}

\maketitle

\begin{abstract}

In order theory, a rank function measures the vertical ``level'' of a
poset element. It is an integer-valued function on a poset which increments
with the covering relation, and is
only available on a graded poset. Defining a vertical measure to an
arbitrary finite poset can be accomplished by extending a rank
function to be interval-valued \cite{JoCHoE14}. This establishes
an order homomorphism from a base poset to a poset over real
intervals, and a standard (canonical) specific interval rank function
is available as an extreme case. Various ordering relations are available over
intervals, and we begin in this paper by considering conjugate orders
which ``partition'' the space of pairwise comparisons of order
elements. For us, these elements are real intervals, and we consider
the weak and subset interval orders as (near) conjugates. It is also
natural to ask about interval rank functions applied reflexively on whatever
poset of intervals we have chosen, and thereby a general iterative
strategy for interval ranks. We explore the convergence properties of
standard and conjugate interval ranks, and conclude with a discussion
of the experimental mathematics needed to support this work.

\end{abstract}

\tableofcontents

\section{Introduction}


A characteristic of partial orders and partially ordered sets (posets) as used in
order theory \cite{DaBPrH90} is that they are
amongst the simplest structures which can be called ``hierarchical'' in the sense of
admitting to descriptions in terms of {\em levels}.
These levels are typically identified via a rank
function as an integer-valued function on a poset which increments
with the covering relation. But rank
functions are only available on graded posets, where all saturated
chains connecting two elements are the same length. Effectively, each
element in a graded poset can be assigned a unique, singular vertical
level.

We have extended the concept of a rank function to all finite posets
by making it interval-valued \cite{JoCHoE14}, and establishing it as
an order homomorphism from a base poset to a poset over real
intervals, for one of many possible order relations on intervals. An
interval-valued rank reflects information about chain lengths to an
element dually from both the top and the bottom of the poset, and a
standard, or canonical, interval rank function is available as the
largest such function, measuring specifically the maximum length
maximum chain from both top and bottom.

As mentioned above, various ordering relations are available over
intervals. The order ordinarily identified on intervals we call the
strong order, where comparability is equivalent to disjointness. But
we have identified \cite{JoCHoE14} two other orders as being much more
useful for interval rank function available on intervals: a weak order
(actually the product order on endpoints) and ordering by
subset. These are (nearly) conjugate orders, in that they
``partition'' the space of pairwise comparisons of order elements. We
begin this paper by considering standard interval rank for the
conjugate to the weak order.  And since interval rank functions are
order homomorphisms to an interval order, it is also natural to ask
about interval rank functions applied reflexively on whatever poset of
intervals we have chosen. This motivates studying the structure of the
homomorphic image of a poset under the standard interval rank
operator, both singly and in repeated iterations.

We close with a discussion of how the use of experimental mathematics, using
computers as a tool to explore problems and form conjectures, influenced this
work.

\section{Preliminaries}

Throughout this paper we will use $\N$ to denote  $\{ 0, 1, \ldots \}$, the
set of integers greater than or equal to 0, and for $N \in \N$, the set of
integers between 0 and $N$ will be denoted $\N_N \define \{ 0, 1, \ldots, N
\}$.

\subsection{Ordered Sets}\label{OrderedSetsDefns}

See e.g.\ \cite{DaBPrH90,ScB03,TrW92} for the basics of order theory, the
following is primarily for notational purposes.

Let $P$ be a finite set of elements with $|P| \ge 2$, and $\le$
be a binary relation on $P$ (a subset of $P^2$) which is reflexive,
transitive, and antisymmetric. Then $\le$ is a {\bf partial order}, and the
structure $\poset = \tup{ P, \le }$ is a {\bf partially ordered set} (poset).
Denote $a < b$ to mean that $a \le b$ and $a \neq b$. $<$ is its own binary
relation, a {\bf strict order} $<$ on $P$, which is an irreflexive partial
order. A strict order $<$ can be turned into a partial order through {\bf
reflexive closure}: $\le
\define < \un \,\{ \tup{ a, a } \st a \in P \}$.

For any pair of elements $a,b \in P$, we say $a \le b \in P$ to mean that $a,b
\in P$ and $a \le b$. And for $a \in P, Q \sub P$, we say $a \le Q$ to mean
that $\forall b \in Q, a \le b$. If $a \le b \in P$ or $b \le a \in P$ then
we say that $a$ and $b$ are {\bf comparable}, denoted $a \sim b$. If not,
then they are {\bf incomparable}, denoted $a \| b$. For $a,b \in P$, let $a
\cover b$ be the {\bf covering relation} where $a \le b$ and $\nexists c \in
P$ with $a < c < b$.

A set of elements $C \sub P$ is a {\bf chain} if $\forall a, b \in C, a \sim b$.
If $P$ is a chain, then $\poset$ is called a {\bf total order}. A chain $C
\sub P$ is {\bf maximal} if there is no other chain $C' \sub P$ with $C \sub
C'$. Naturally all maximal chains are {\bf saturated}, meaning that $C = \{
a_i \}_{i=1}^{|C|} \sub P$ can be sorted by $\le$ and written as $C = a_1
\cover a_2 \cover \ldots \cover a_{|C|}$. The {\bf height} $\height(\poset)$
of a poset is the size of its largest chain. Below we will use $\height$
alone for $\height(\poset)$ when clear from context.

A set of elements $A \sub P$ is an {\bf antichain} if $\forall a,b \in A, a \|
b$. The {\bf width} ${\cal W}(\poset)$ of a poset is the size of its largest
antichain.

A partial order $\le$ generates a unique {\bf dual} partial order $\ge$,
where $a \ge b \in P$ iff $b \le a \in P$. Given $\po = \tup{P, \leq}$ we
denote by $\po^* = \tup{P,\geq}$ the dual of $\po$. Two partial orders
$\leq_1, \leq_2$ on the same set $P$ are said to be {\bf conjugate} if every
pair of distinct elements of $P$ are comparable in exactly one of $\leq_1$ or
$\leq_2$, i.e.\ that:
	\[ \forall a \neq b \in P,	\quad
		( a \sim_1 b \h{ and } a \|_2 b ) \h{ or }
		( a \|_1 b  \h{ and } a \sim_2 b ).	\]

For any subset of elements $Q \sub P$, let $\psub{Q} = \tup{ Q, \le_Q }$ be the
sub-poset determined by $Q$, so that for $a,b \in Q$, $a \le_{Q} b \in Q$ if
$a \le b \in P$.

For any element $a \in P$, define the {\bf up-set} or {\bf principal filter}
$\up a \define \{ b \in P \st b \ge a \}$, {\bf down-set} or {\bf principal
ideal} $\down a \define \{ b \in P \st b \le a \}$, and {\bf hourglass}
$\Xi(a) \define \up a \un \down a$. For $a \le b \in P$, define the {\bf
interval} $[a,b] \define \{ c \in P \st a \le c \le b \} = \up a \int \down b
$.

For any subset of elements $Q \sub P$, define its maximal and minimal elements
as
	\[ \Maxxx(Q) \define
		\{ a \in Q \st {\nexists} b \in Q, a < b \}	\sub Q  \]
	\[ \Minnn(Q) \define
		\{ a \in Q \st {\nexists} b \in Q, b < a \} \sub Q,	\]
	called the {\bf roots} and {\bf leaves} respectively. Except where
noted, in this paper we will assume that our posets $\poset$ are bounded, so
that $\bot \le \top \in P$ with $\Maxxx(P) = \{ \top \}, \Minnn(P) = \{ \bot
\}$. Since we've disallowed the degenerate case of $|P|=1$, we have $\bot <
\top \in P$. All intervals $[a,b]$ are bounded sub-posets, and since $\poset$
is bounded, $\forall a \in P, \up a = [ a, \top ], \down a = [ \bot, a ]$,
and thus $\bot,\top \in \Xi(a) \sub \poset = [\bot,\top]$. An example of a
bounded poset and a sub-poset expressed as an hourglass is shown in
\fig{allex2}.

\mytikz{\bddExWithHourglass}{>=latex, line width=0.75pt}{allex2}{(Left) The
Hasse diagram (canonical visual representation of the covering relation
$\cover$) of an example bounded poset $\poset$. (Right) The Hasse diagram of
the sub-poset $\psub{\Xi(J)}$ for the hourglass $\Xi(J) = \up J \un \down J =
[ J, \top ] \un [ \bot, J ] = \{ J,C,K,\top \} \un \{\bot,J\} = \{ \bot, C,
J, K, \top \}$.}

The following properties are prominent in lattice theory, and are available
for some of the highly regular lattices and posets that appear there (see
e.g.\ Aigner \cite{AiM79}).

%
%
\begin{defn}[(Rank Function and Graded Posets)]
For a top-bounded poset $\poset$ with $\top \in P$, a function
$\func{\rho}{P}{\N_{\height-1}}$ is a {\bf rank function} when $\rho(\top) =
0$ and $\forall a \cover b \in P, \rho(a) = \rho(b) - 1$. A poset $\poset$ is
{\bf graded}, or {\bf fully graded}, if it has a rank function.
\end{defn}
%

Let $\chains(\poset) \sub \pow{P}$ be the set of all maximal chains of
$\poset$. We assume that $\poset$ is bounded, therefore $\forall C \in
\chains(\poset), \bot,\top \in C$. 
	We will refer to the {\bf spindle chains} of a poset $\poset$ as
the set of its maximum length chains
	\[ \spindle(\poset) \define
		\left\{ C \in \class( \poset ) \st |C| =
			\height \right\}.	\]
	The {\bf spindle set}
	\[ I(\poset) \define \Un_{C \in \spindle(\poset)} C \sub P	\]
	is then the set of {\bf spindle elements}, including any elements which sit on
a spindle chain. Note that if $P$ is nonempty then there is always at least
one spindle chain and thus at least one spindle element, so $\spindle(\poset),
I(\poset) \neq \emptyset$. In our example in \fig{allex2}, we have $\height =
5$, $|\chains(\poset)| = 6$, $\spindle(\poset) = \{ \bot \cover A \cover H
\cover K \cover \top \}$, and $S(J) = \height( \up J ) + \height(\down J ) -
1 = 4$.

Given two posets $\poset_1 = \tup{ P, \le_P }$ and $\poset_2 = \tup{ Q, \le_Q
}$, a function $\func{f}{P}{Q}$ is an \textbf{order embedding} if $\forall
a,b \in P$ we have $a \le_P b \iff f(a) \le_Q f(b)$. A weaker notion is that
of $f$ being an \textbf{order homomorphism}
if $\forall a \le_P b \in P, f(a) \le_Q f(b)$. We can also say that $f$
\textbf{preserves the order} $\le_P$ into $\le_Q$, and is an \textbf{isotone}
mapping from $\poset_1$ to $\poset_2$. If $\forall a <_P b \in P, f(a) <_Q
f(b) \in Q$ then we say that $f$ does all this \textbf{strictly}. If instead
$f$ is an order homomorphism from $\poset_1$ to the dual $\tup{ Q, \ge_Q }$,
then we say that $f$ \textbf{reverses the order}, or $f$ is an
\textbf{antitone} mapping. If $\func{f}{P}{Q}$ is an order homomorphism, then
we can denote $f(\poset_1) \define \tup{ f(P), \le_f }$ as the
\textbf{homomorphic image} of $\poset_1$, with
	\[ f(P) \define \{ f(a) \st a \in P \} \sub Q,	\quad
		\le_f\, \, \define
            \leq_Q\!\left|_{f(P)\times f(P)}\right. \]
When clear from context, we will simply re-use $\le$ as the relevant order to
its base set, e.g.\ for an isotone $\func{f}{P}{Q}, a \le b \in P \implies
f(a) \le f(b) \in Q$. A \textbf{linear extension} of a poset $\poset$ is a
chain, $\mathcal{C}$, for which there is an order homomorphism
$\func{f}{P}{C}$.

The {\bf dimension} \cite{BaKFiP72,TrW92} of a poset $\poset = \tup{ P, \le
}$, denoted $\dim(\poset)$, is the minimum number, $m$, of total orders,
$\le^T_i$, such that there is an order embedding of $\le$ into $\Int_{i=1}^m
\le^T_i$.  Dimension is related to width, in that $\dim(\poset) \le {\cal
W}(\poset)$, but is a fundamentally different concept. The following theorem
from \cite{TrW92}, is taken to be the definition of dimension by some
\cite{OrO62}.

\begin{thm}\cite{TrW92}\label{dimN}
Let $\po=\tup{P,\leq}$ be a poset. Then $\dim(\po)$ is the least $t$ for
which $\po$ is isomorphic to an induced subposet of $\R^t = \tup{\R \times
\cdots \times \R, \leq \times \cdots\times\leq}$.
\end{thm}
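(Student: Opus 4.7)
The plan is to prove both inequalities: $\dim(\po) \le t^*$ where $t^*$ is the minimum embedding dimension into $\R^t$, and $t^* \le \dim(\po)$.

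First I would do the easy direction, $t^* \le \dim(\po)$. Set $m = \dim(\po)$ and let $\le^T_1, \ldots, \le^T_m$ be total orders on $P$ whose intersection is $\le$. Because $P$ is finite, each total order $\le^T_i$ can be realized by an injective isotone map $f_i : P \to \R$ (for instance, assign each element its rank $1, 2, \ldots, |P|$ in $\le^T_i$). Then define $\func{f}{P}{\R^m}$ by $f(a) = (f_1(a), \ldots, f_m(a))$. For $a,b \in P$ we have $a \le b$ iff $a \le^T_i b$ for all $i$ iff $f_i(a) \le f_i(b)$ for all $i$ iff $f(a) \le f(b)$ componentwise in $\R^m$. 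So $f$ is an order embedding and the image is an induced subposet isomorphic to $\po$, giving $t^* \le m$.

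For the reverse direction, $\dim(\po) \le t^*$, let $\func{f}{P}{\R^t}$ be an order embedding realizing $t = t^*$, with coordinate maps $f_1, \ldots, f_t$. The subtlety here, and the main obstacle, is that the individual coordinates $f_i$ need not be injective: if two elements share a coordinate value, $f_i$ only induces a preorder on $P$, not a linear order, and we need linear extensions. I would handle this by a perturbation argument. For each $i$, pick any linear extension $\le^T_i$ of $\po$ that refines the preorder induced by $f_i$ (i.e.\ $f_i(a) < f_i(b) \Rightarrow a <^T_i b$, and incomparable elements with equal $f_i$-values are ordered arbitrarily consistent with $\le$). Such an extension exists by a standard topological-sort argument since $f_i$ is isotone on $\po$.

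It remains to verify $\le \,=\, \Int_{i=1}^t \le^T_i$. The inclusion $\le \,\sub\, \le^T_i$ holds by construction, so one containment is automatic. For the other, suppose $a \le^T_i b$ for every $i$ but $a \not\le b$ in $\po$. Since $f$ is an order embedding into $\R^t$ with the product order, $a \not\le b$ means there is some coordinate $j$ with $f_j(a) > f_j(b)$. But then $\le^T_j$ would force $b <^T_j a$, strictly, contradicting $a \le^T_j b$. Hence the intersection of the $t$ linear extensions equals $\le$, so $\dim(\po) \le t = t^*$, completing the proof. The key step to watch is the tie-breaking: one must be careful to choose each $\le^T_i$ as an extension of $\le$ itself, not merely an arbitrary linearization of the $f_i$-preorder, so that the intersection cannot introduce spurious comparabilities.
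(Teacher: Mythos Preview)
The paper does not supply its own proof of this theorem; it is simply cited from Trotter's monograph and then invoked as a black box (for instance in the proof of Corollary~\ref{dim2}). So there is nothing in the paper to compare your argument against.

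That said, your proof is the standard one and is correct. The only place worth a sentence more of care is the existence, for each coordinate $i$, of a linear order that simultaneously extends $\le$ and refines the strict preorder $\{(a,b): f_i(a) < f_i(b)\}$. The clean way to justify this is to observe that the union of these two relations is acyclic: along any putative cycle every step satisfies $f_i(a_j) \le f_i(a_{j+1})$ (either strictly by the second relation, or weakly by isotonicity of $f_i$ on $\le$), so all $f_i$-values on the cycle coincide, forcing every step to be a $\le$-step and contradicting antisymmetry. Hence the transitive closure is a partial order and admits a linear extension, which is exactly the $\le^T_i$ you need. With that detail spelled out, both directions go through as written, and your closing caution about tie-breaking is precisely the point that guarantees the inclusion $\le \subseteq \bigcap_i \le^T_i$.
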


Finally, we recognize $\tup{ \N, \le }$ as a total order using the normal
numeric order $\le$, and observe that for any bounded poset $\poset$, the
functions $\height(\up \cdot), \height(\down \cdot): P \rightarrow
\N_{\height}$ induce strict antitone and isotone order morphisms,
respectively. That is, if $a \cover b$ then
\[ \height(\up a) \geq \height(\up b)+1 > \height(\up b) \qquad\text{and}\qquad
    \height(\down a) < \height(\down a)+1 \leq \height(\down b). \]

\subsection{Interval Orders}\label{intervals}

Since our rank functions are interval-valued, we explicate the concepts
surrounding the possible ordering relations among intervals (see
\cite{JoCHoE14} for a full discussion). $\tup{\R,\le}$
is a total order, so for any $x_* \le x^* \in \R$, we can denote the (real)
interval $\bar{x} = [ x_*, x^* ]$, and $\overline{\R}$ as the set of all real
intervals on $\R$, so that $\bar{x} \in \overline{\R}$. Additionally, for $N
\in \N$ let $\overline{N}$ be the set of all intervals whose endpoints are
nonnegative integers $\le N$.


We now define the two most common interval orders (strong, subset), and a
third less common order (weak) which we focus on in this paper.

	\begin{defn}[(Strong Interval Order)] \label{strong}
	Let $<_S$ be a strict order on $\overline{\R}$ where $\bar{x} <_S
\bar{y}$ iff $x^* < y_*$. Let $\le_S$ be the reflexive closure of $<_S$, so
that $\bar{x} \leq_S \bar{y}$ iff $x^* < y_*$ or $\bar{x} = \bar{y}$.
	\end{defn}

	\begin{defn}[(Subset Interval Order \cite{TaP96b})]
	Let $\sub$ be a partial order on $\overline{\R}$ where $\bar{x}
\sub \bar{y}$ iff $x_* \ge y_*$ and $x^* \le y^*$.
	\end{defn}

	\begin{defn}[(Weak Interval Order)] \label{weak}
	Let $\le_W$ be a partial order on $\overline{\R}$ where $\bar{x} \le_W
\bar{y}$ iff $x_* \le y_*$ and $x^* \le y^*$. This is equivalent to the
product order $\leq \times \leq$ on $\R^2$.
	\end{defn}

Notice that the weak order $\leq_W$ and the subset order $\subseteq$ are
nearly conjugate orders \cite{BaKFiP72,DuBMiE1941}. First, Papadakis and
Kaburlasos \cite{PaSKaV10} observe that where our weak order $\le_W$ on
intervals is the product order on the reals $\le \times \le$, the subset
order $\sub$ is also a product order, but conjugately as $\ge \times \le$.
But $\leq_W$ and $\subseteq$ are not truly conjugate, since it is possible
for two distinct intervals to be comparable in both $\leq$ and $\subseteq$ if
there is equality at one of the endpoints. As an example, $[3,4]\subseteq
[2,4]$ and $[2,4] \leq_W [3,4]$.

One can ask whether the strong order $\le_S$ also has a conjugate, and the
answer appears to be yes, at least in small cases. We have experimentally
identified numerous conjugates and pseudo-conjugates (in the sense in which
$\le_W$ and $\subseteq$ are pseudo-conjugates) for $\le_S$ on the set of
integer intervals with endpoints between 1 and 4, but none appear to have a
compactly expressible structure, as both $\le_W$ and $\subseteq$ do
(see further discussion below in \sec{conclude}).

\subsection{Interval Rank}

For a full development of the concept of interval rank see
\cite{JoCHoE14}, here we present the definitions needed for the rest of
this paper. We begin by defining general interval rank functions, strict
interval rank functions, and a standard interval rank function, a
particularly significant strict interval rank function.

\begin{defn}[(Interval Rank Function)]\label{IntRank}
Let $\poset = \tup{P,\le}$ be a poset and $\sqsubseteq$ an order on real
intervals.  Then a function, $\func{R_\sqsubseteq}{P}{\overline{\N}}$, with
$R_\sqsubseteq(a) = [ r_*(a), r^*(a) ]$ for $a \in P$, is an {\bf interval
rank function for $\sqsubseteq$} if $R_\sqsubseteq$ is a strict order
homomorphism $\po \mapsto \tup{\overline{\N},\sqsubseteq}$, i.e., for all $a
< b$ in $\po$ we have $R_\sqsubseteq(a) \sqsubset R_\sqsubseteq(b)$. Let
${\bf R}_\sqsubseteq(\poset)$ be the set of all interval rank functions
$R_\sqsubseteq$ for the interval order $\sqsubseteq$ on a poset $\poset$.
\end{defn}

In \cite{JoCHoE14} we showed that the endpoints of these interval rank
functions are monotonic iff the interval order is one of $\{\le_W, \ge_W,
\subseteq, \supseteq\}$. In order to ensure that both endpoints of the
intervals are \emph{strictly} monotonic we must introduce the strict interval
rank function.

\begin{defn}[(Strict Interval Rank Function)]
An interval rank function $R_\sqsubseteq \in {\bf R}_\sqsubseteq(\poset)$ is
{\bf strict} if $r_*$ and $r^*$ are strictly monotonic. Let ${\bf
S}_\sqsubseteq(\poset)$ be the set of all strict interval rank functions
$R_\sqsubseteq$ for the interval order $\sqsubseteq$ on a poset $\poset$.
\end{defn}

Strict interval rank functions have the distinction of being the only
interval-valued functions which are strictly monotonic on both interval
endpoints. Each of the four types of monotonicity corresponds to an interval
order.
\begin{prop}\label{EndpointMonotoneStrict} \cite{JoCHoE14}
	Let $F : P \rightarrow \overline{\N}$ be an interval-valued
function on poset $\tup{\poset,\leq}$, so that $F(a) = [F_*(a),F^*(a)] \in
\overline{\N}$. Then, $F_*$ and $F^*$ are strictly monotonic functions iff
$F$ is a strict interval rank function for $\leq_W,\geq_W,\subseteq,$ or
$\supseteq$. In particular:
	\begin{enumerate}[(i)]
  \item $F_*$ and $F^*$ are both strictly isotone iff $\sqsubseteq= \le_W$.
  \item $F_*$ and $F^*$ are both strictly antitone iff $\sqsubseteq=
      \ge_W$.
  \item $F_*$ is strictly antitone and $F^*$ is strictly isotone iff
      $\sqsubseteq = \subseteq$.
  \item $F_*$ is strictly isotone and $F^*$ is strictly antitone iff
      $\sqsubseteq = \supseteq$.
\end{enumerate}
\end{prop}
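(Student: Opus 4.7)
The plan is to prove the biconditional by a four-way case analysis on the possible sign combinations of the two strict monotonicities. Since each of $F_*$ and $F^*$ is independently either strictly isotone or strictly antitone, there are exactly four possibilities, and the assertion is that these are in bijection with the four orders $\{\leq_W, \geq_W, \subseteq, \supseteq\}$ for which $F$ can be a strict interval rank function. The argument is essentially bookkeeping driven by the definitions of the orders (Definition \ref{weak} and the subset order definition) together with the definition of a (strict) interval rank function.

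For the reverse direction, if $F$ is a strict interval rank function for any one of these four orders, then by the definition of \emph{strict} both $F_*$ and $F^*$ are strictly monotonic, so this direction is immediate. For the forward direction, assume $F_*$ and $F^*$ are both strictly monotonic and split into the four cases. Take any $a < b$ in $\poset$ and translate the resulting strict inequalities on $F_*(a), F_*(b)$ and $F^*(a), F^*(b)$ into the defining relation of the appropriate target order. For instance, if both endpoints are strictly isotone, then $F_*(a) < F_*(b)$ and $F^*(a) < F^*(b)$, which by Definition \ref{weak} is exactly $F(a) <_W F(b)$, so $F$ is a strict order homomorphism into $\tup{\overline{\N}, \leq_W}$ and hence a strict interval rank function for $\leq_W$, establishing case (i). Case (ii) is its dual under reversing both inequalities. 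In case (iii), $F_*$ antitone and $F^*$ isotone give $F_*(a) > F_*(b)$ and $F^*(a) < F^*(b)$, and the subset order definition reads this as $F(a) \subseteq F(b)$. Case (iv) is the mirror of (iii), producing $\supseteq$.

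There is no genuinely hard step; the argument is a matter of aligning signs. The one place to take care is distinguishing $\subseteq$ from $\supseteq$ in the two mixed (isotone/antitone) cases: the subset order is defined by $x_* \geq y_*$ paired with $x^* \leq y^*$, so $\subseteq$ arises when $F_*$ is antitone and $F^*$ is isotone, not the reverse. The four cases are also manifestly mutually exclusive, which explains why the four interval orders together exhaust (without overlap) the strict interval rank functions available on $\poset$, and also why there is no fifth interval order on $\overline{\N}$ that could appear in the statement.
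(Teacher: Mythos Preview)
The paper does not actually prove this proposition; it is quoted from \cite{JoCHoE14} and stated without proof, so there is no in-paper argument to compare against. Your proof is correct and is the natural unpacking of the definitions: the four sign patterns of $(F_*,F^*)$ match one-for-one with the endpoint inequalities defining $\leq_W$, $\geq_W$, $\subseteq$, $\supseteq$, and the ``strict'' hypothesis on the interval rank function is exactly what makes the endpoint maps strictly monotone.

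One small point worth making explicit is the reverse direction of each of (i)--(iv) individually, not just of the global biconditional. You cover the global reverse direction (``strict'' forces both endpoints to be strictly monotone), but for, say, (i) you also need that if $F$ is a strict interval rank function for $\leq_W$ then the monotonicity of $F_*$ and $F^*$ is specifically \emph{isotone}. This follows because $a<b$ forces $F(a)<_W F(b)$, hence $F_*(a)\le F_*(b)$, which rules out $F_*$ being strictly antitone (the poset is bounded with $\bot<\top$, so there is at least one comparable pair to test on); your remark that the four cases are mutually exclusive is essentially this observation, but spelling it out once would tighten the argument.
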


We point out one particular strict interval rank function that is
especially useful, and call it the standard interval rank function.

\begin{defn}[(Standard Interval Rank)]
Let
\[ R^+(a) \define  [ \height(\up a)-1, \height - \height(\down a) ]
\in
\overline{\height-1} \]
be called the {\bf standard interval rank function}. For convenience we also
denote $R^+(a) = \left[ r^t(a), r^b(a) \right]$, where $r^t(a) \define
\height(\up a)-1$ is called the {\bf top rank} and $r^b(a) \define \height -
\height(\down a)$ is called the {\bf bottom rank}.
\end{defn}

The following proposition shows why we prefer $\Rpl$ over any other strict
interval rank function.
\begin{prop} \cite{JoCHoE14} \label{stdIntRankMaxSub}
For a finite bounded poset $\poset$, $R^+$ is maximal w.r.t. $\subseteq$ in
the sense that $\forall R \in {\bf S}_{\ge_W}(\poset)$ with $R(\poset)
\subseteq \overline{\height-1}$, $\forall a \in P, R(a) \sub R^+(a)$.
\end{prop}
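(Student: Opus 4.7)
My plan is to translate the claim into two numerical inequalities for the endpoints and then derive each by a counting argument along a chain of maximum length. Writing $R(a) = [r_*(a), r^*(a)]$ and $R^+(a) = [r^t(a), r^b(a)] = [\height(\up a) - 1, \height - \height(\down a)]$, the containment $R(a) \subseteq R^+(a)$ is equivalent to
\[
\height(\up a) - 1 \;\le\; r_*(a) \qquad\text{and}\qquad r^*(a) \;\le\; \height - \height(\down a).
\]
By Proposition \ref{EndpointMonotoneStrict}, since $R \in \mathbf{S}_{\ge_W}(\poset)$ both $r_*$ and $r^*$ are strictly antitone integer-valued functions, and the hypothesis $R(\poset) \subseteq \overline{\height - 1}$ confines their values to $\{0, 1, \ldots, \height - 1\}$. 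So the entire proof reduces to pigeonhole on chains.

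For the lower bound on $r_*(a)$, I would choose a saturated chain of maximum length in $\up a$, say $a = c_0 \lessdot c_1 \lessdot \cdots \lessdot c_k = \top$ with $k = \height(\up a) - 1$. Strict antitonicity of $r_*$ yields
\[
r_*(a) > r_*(c_1) > \cdots > r_*(c_k) \;\ge\; 0,
\]
a strictly decreasing sequence of $k+1$ nonnegative integers. Hence $r_*(a) \ge k = \height(\up a) - 1$, which is the first inequality.

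For the upper bound on $r^*(a)$, I would dually pick a saturated chain of maximum length in $\down a$, say $\bot = d_m \lessdot d_{m-1} \lessdot \cdots \lessdot d_0 = a$ with $m = \height(\down a) - 1$. Strict antitonicity of $r^*$ gives
\[
r^*(a) < r^*(d_1) < \cdots < r^*(d_m) \;\le\; \height - 1,
\]
so $r^*(a) + m \le \height - 1$, i.e., $r^*(a) \le \height - \height(\down a)$, which is the second inequality.

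The only subtlety, and hence the ``main obstacle,'' is recognizing that the two hypotheses pull in opposite directions: antitonicity forces the endpoint values to spread apart along chains, while the confinement $R(\poset) \subseteq \overline{\height - 1}$ caps them within an interval of length $\height - 1$. The argument is really just noting that these two constraints together are tight along the longest up- and down-chains through $a$, and that tightness is exactly what $R^+$ achieves. No further case analysis is required, and the standard and strict hypotheses on $R$ are used only through antitonicity of the endpoints from Proposition \ref{EndpointMonotoneStrict}.
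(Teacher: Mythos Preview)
Your proof is correct. The paper does not actually prove this proposition here---it is quoted from \cite{JoCHoE14}---but it does prove the analogous extremality statement for the conjugate rank in Proposition~\ref{conjugate}(iii), and your argument is essentially the same chain-counting method used there: take a longest chain in $\up a$ (respectively $\down a$), apply strict antitonicity of the endpoint functions from Proposition~\ref{EndpointMonotoneStrict}, and combine with the range constraint $0 \le r_*,r^* \le \height-1$ to force the two inequalities. The only cosmetic difference is that the paper's version first pins down the values at $\top$ and $\bot$ via the spindle chain before running the argument, whereas you invoke the range bound directly; the two are equivalent.
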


\section{Conjugate Interval Rank}\label{conjIntRank}

Just as there is a privileged strict interval rank function for the
weak interval order $\geq_W$,
there is also one for the subset interval order $\subseteq$. We will call this the {\bf conjugate
standard interval rank function} because it is an interval rank function for
the interval order $\subseteq$, a conjugate interval order to $\geq_W$ for
which the standard interval rank is defined, and because its properties
closely mirror those of the standard interval rank function.

\begin{defn}[(Conjugate Standard Interval Rank)]
Let
	\[ \RplConj(a) = [ \rConjT(a), \rConjB(a) ] \define
		[\height(\upa)-1,\height + \height(\dna)-2] \] be called the {\bf
conjugate standard interval rank function}.	
\end{defn}

In \fig{bothRanks} we show the standard interval rank values along with the
conjugate standard interval rank values for the same example poset. We can
prove a result for $\RplConj$ analogous to that found in Proposition
\ref{stdIntRankMaxSub} for $R^+$, plus two other properties.

	\begin{figure*}[htbp]
	\begin{center}
	\subfloat[Example poset with $\Rpl(p)$ for each $p \in \po$.]{
	\begin{tikzpicture}[>=latex, line width=0.75pt]
	\bddExWithStdRanks
	\end{tikzpicture}}
	\hspace{1cm}
	\subfloat[Example poset with $\RplConj(p)$ for each $p \in \po$. ]{
	\begin{tikzpicture}[>=latex, line width=0.75pt]
	\bddExWithConjStdRanks
	\end{tikzpicture}
	}
	\caption{}
	\label{bothRanks}
	\end{center}
	\end{figure*}

\begin{prop}\label{conjugate}
For a finite bounded poset $\poset$,
\begin{enumerate}[(i)]
\item $\RplConj \in {\bf S}_{\subseteq}(\poset)$ is a strict interval rank
    function for the subset interval order $\subseteq$;
\item $\RplConj(\top) = [0,2(\height-1)];$ $\RplConj(\bot) = [ \height-1,
    \height-1]$;
\item $\RplConj$ is minimal w.r.t. $\leq_W$ in the sense that $\forall R
    \in {\bf S}_{\subseteq}(\poset)$ with $R(\poset) \subseteq
    \overline{2(\height-1)}$, $\forall a \in P, R(a) \ge_W \RplConj(a)$.
\end{enumerate}
\end{prop}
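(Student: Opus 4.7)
The plan is to treat the three parts in order, using Proposition~\ref{EndpointMonotoneStrict}(iii) as the characterization of strict interval rank functions for $\subseteq$, and leveraging chain-length arguments for the minimality claim in (iii).

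For part (i), I would first verify that $\RplConj(a)$ is a well-defined interval, i.e.\ that $\height(\up a) - 1 \leq \height + \height(\down a) - 2$; this is immediate from $\height(\up a) \leq \height$ and $\height(\down a) \geq 1$. Then $\rConjT = \height(\up \cdot) - 1$ is strictly antitone and $\rConjB = \height + \height(\down \cdot) - 2$ is strictly isotone, by the strict monotonicity of $\height(\up \cdot)$ and $\height(\down \cdot)$ recorded at the end of Section~\ref{OrderedSetsDefns}. Proposition~\ref{EndpointMonotoneStrict}(iii) then yields $\RplConj \in {\bf S}_\subseteq(\poset)$. Part (ii) is a direct substitution, using $\height(\up \top) = \height(\down \bot) = 1$ and $\height(\down \top) = \height(\up \bot) = \height$.

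Part (iii) is the main obstacle. Fix $R \in {\bf S}_\subseteq(\poset)$ with $R(\poset) \subseteq \overline{2(\height-1)}$. By Proposition~\ref{EndpointMonotoneStrict}(iii), $R_*$ is strictly antitone and $R^*$ is strictly isotone, both integer-valued in $\{0,1,\ldots,2(\height-1)\}$, with $R_*(a) \leq R^*(a)$ for all $a$. The bound $R_*(a) \geq \rConjT(a)$ follows by taking a maximum chain in $\up a$ of length $\height(\up a)$ ending at $\top$: strict antitonicity of $R_*$ along this chain, combined with $R_*(\top) \geq 0$, forces $R_*(a) \geq \height(\up a) - 1$. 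For $R^*(a) \geq \rConjB(a)$, I would apply strict isotonicity of $R^*$ along a maximum chain in $\down a$ of length $\height(\down a)$ starting at $\bot$, yielding $R^*(a) \geq R^*(\bot) + \height(\down a) - 1$, so it suffices to show $R^*(\bot) \geq \height - 1$. For this, apply strict antitonicity of $R_*$ along a spindle chain $\bot \cover \cdots \cover \top$ of length $\height$ to obtain $R_*(\bot) \geq R_*(\top) + \height - 1 \geq \height - 1$, and then invoke $R^*(\bot) \geq R_*(\bot)$ because $R(\bot)$ is a valid interval.

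The conceptual crux—and the only step beyond routine monotonicity bookkeeping—is this coupling at $\bot$: bounding the isotone endpoint $R^*$ from below requires transferring a bound from the antitone endpoint $R_*$ via the validity of the interval $R(\bot)$. This is precisely what produces the extra $\height - 1$ summand distinguishing $\rConjB(a) = \height + \height(\down a) - 2$ from the naive $\height(\down a) - 1$ lower bound one would get from the chain in $\down a$ alone.
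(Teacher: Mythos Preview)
Your proof is correct and follows essentially the same strategy as the paper's: monotonicity of $\height(\up\cdot)$ and $\height(\down\cdot)$ plus Proposition~\ref{EndpointMonotoneStrict}(iii) for part~(i), direct substitution for~(ii), and chain-length arguments anchored at the value of $R$ on $\bot$ for part~(iii). Your well-definedness check in~(i) is in fact more direct than the paper's (which detours through the spindle inequality $\height(\up a)+\height(\down a)\leq\height+1$), and in~(iii) you extract only the needed inequality $R^*(\bot)\geq\height-1$ via $R^*(\bot)\geq R_*(\bot)\geq\height-1$, whereas the paper squeezes out the full equality $R(\bot)=[\height-1,\height-1]$ using the upper bound on $r^*(\top)$ as well---your version is sufficient and slightly more economical.
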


\begin{proof}\mbox{}
\begin{enumerate}[(i)]
  \item We first show that $\rConjT(a) \leq \rConjB(a)$ so that $\RplConj$
      is an interval-valued function. We claim that $\height(\up a) -1 \leq
      \height - \height(\down a)$. Indeed, $\forall a \in P$ we have that
      \[ \height(\down a) + \height(\up a) \leq \height([0,1])+1 = \height+1 \]
      with equality iff $a \in I(\poset)$ is a spindle element. Rearranging we
      have the desired $\height(\up a) -1 \leq \height - \height(\down a)$.
      Then, if we increase only the right side by a positive value we
      retain the inequality. Since $\height(\down a) \geq 1$ we can add
      $2\height(\down a)-2 >0$ to the right side. This yields
      \[ \height(\up a) -1 \leq \height - \height(\down a) +2\height(\down a)-2 = \height+\height(\down a)-2. \]
      Therefore, $\RplConj$ is an interval-valued function.

      Then, it is evident from $\height(\upa)$ being strictly antitone, and
    $\height(\dna)$ being strictly isotone that $\rConjT$ is strictly
    antitone and $\rConjB$ is strictly isotone. Therefore, by Proposition
    \ref{EndpointMonotoneStrict} we have that $\RplConj$ is a strict
    interval rank function for $\subseteq$.
  \item Follows from $\height(\up \top) = \height(\down \bot) = 1$ and
      $\height(\down \top) = \height(\up \bot)=\height$.

  \item Since any $R \in {\bf S_{\sub}}(\poset)$ is a strict interval rank
      function for $\subseteq$ we know that
    if $a<b \in P$, $R(a) \subset R(b)$. Therefore
    \[ r_*(a) > r_*(b) \text{ and } r^*(a) < r^*(b) \]
    so that $r_*$ is strictly antitone and $r^*$ is strictly isotone. In
    addition, we are restricting to the case where $0 \leq r_*\leq r^* \leq
    2(\height-1)$. Under these assumptions we must show that $\forall a \in
    P$
    \begin{align*}
    r_*(a) &\geq \height(\up a)-1\\
    r^*(a) &\geq \height+\height(\down a)-2.
    \end{align*}

    First notice that, by definition of $\height$, there must be a chain
    $C_{\height} \subseteq P$ of length $\height$ with greatest element
    $\top$ and least element $\bot$. Since $R(\poset) \subseteq
    \overline{2(\height-1)}$, $r_*$ is strictly antitone, $r^*$ is strictly
    isotone, and $r_* \leq r^*$ we must have $R(\bot) =
    [\height-1,\height-1]$ (so that we can decrease $r_*$ by one along
    $C_{\height}$ and stay positive, and increase $r^*$ by one and stay
    less than $2(\height-1)$ as we go from $\bot$ to $\top$) and $R(\top) =
    [0,2(\height-1)]$.

    Now, let $a \in P$, by definition of $\height(\cdot)$ we know that
    there must be a chain $C \subseteq P$ of length $\height(\up a)$ with
    greatest element $\top$ and least element $a$. We already showed that
    $r_*(\top) = 0$. Then, in order for $r_*$ to be strictly antitone we
    need $\forall c_1 < c_2 \in C$, $r_*(c_1) > r_*(c_2)$. Therefore
    $r_*(c)$ must be at least the chain distance from $\top$ to $c$ along
    $C$, less one, for all $c \in C$. In particular, $r_*(a) \geq
    \height(\up a) -1$.

    Dually, there must be a chain $D \subseteq P$ of length $\height(\down
    a)$ with greatest element $a$ and least element $\bot$. We already know
    $r^*(\bot)=\height-1$. In order for $r^*$ to be strictly isotone it
    must be true that $\forall d_1 < d_2 \in D$, $r^*(d_1) < r^*(d_2)$.
    Therefore, $r^*(d)$ must be at least $\height-1$ plus the chain
    distance from $\bot$ to $d$ along $D$, less one, for all $d \in D$. In
    particular
    \[ r^*(a) \geq \height-1 + (\height(\down a)-1) = \height+\height(\down a)-2.\]
\end{enumerate}
\end{proof}

One might ask why $R^+$ is called ``standard''
when its conjugate $\RplConj$ is available. First, the behavior of $R^+$ is
much more natural and meets our criteria of advancing monotonically with
level, where the intervals for $\RplConj$ ``nest''. We will see in
Proposition \ref{conjIso} below that the homomorphic image under the
conjugate interval rank $\RplConj$ is isomorphic to that of the standard
interval rank $R^+$, and thus does not bring any particular value compared to
$R^+$. And finally, $\RplConj$ does not conform to our desired criteria of
ranks being in the set $\{0,1,\ldots,\height-1\}$, but rather being in
$\{0,1,\ldots,2(\height-1)\}$.

\section{The Homomorphic Image of Standard Interval Ranks}

A general interval rank function $R$ takes elements $p \in P$ of a poset to
intervals $R(p) \in \overline{\height-1}$. But from the discussion in
\sec{intervals}, we know that these intervals $R(p)$ can also be ordered. In
this section we consider the behavior and properties of standard interval
rank $R^+$ as an order morphism.

The rank intervals $R^+(a)$ are themselves elements in a poset, that is in the
homomorphic image $R^+(\poset) = \tup{ R^+(P), \le_W }$. Thus they also have
an interval rank structure within $R^+(\poset)$. We can analyze the structure
of this homomorphic image, including changes in height, width, and dimension
compared to the underlying poset $\tup{P,\le}$. And finally, $R^+$ as an
order morphism can be iteratively applied to derive $R^+(R^+(\poset))$, etc.,
and we can examine the long-term properties of this iterated
application of $R^+$.

\subsection{The Interval Rank Poset} \label{structureRpl}

Recalling the definition of the homomorphic image of a poset given in Section
\ref{OrderedSetsDefns}, we will refer to $\RplP$ as the \emph{interval rank
poset of $\po$}. From Definition \ref{IntRank} we know that $\Rpl$ is a
strict order homomorphism into $\tup{\overline{N},\geq_W}$. Therefore,
$\RplP$ is an induced subposet of $\overline{\height-1}$ with $\geq_W$ as its
ordering. \fig{homo} shows the homomorphic image of the poset found in
\fig{allex2}. It is easily verified that there is a strict order homomorphism
from $\poset$ to $R^+(\poset)$. But notice that two of the elements of $\po$,
$J$ and $E$, had the same standard interval rank, so they collapse into a
single element, $[2,3]$, in $\RplP$. These behaviors will be considered in
detail below in \sec{iterating}.

The structure of $R^+(\poset)$ allows us to identify elements $a,b \in P$ which
are either comparable $R^+(a) \sim_W R^+(b)$ or noncomparable $R^+(a) \|_W
R^+(b)$ in terms of the weak interval order relation $\le_W$ between their
interval ranks. If they are noncomparable in the weak order they are thereby
comparable in the conjugate subset order, so that $R^+(a) \sub R^+(b)$ or
$R^+(a) \supseteq R^+(b)$.

\begin{figure}[htbp]
  \begin{center}
    \includegraphics[scale=0.225]{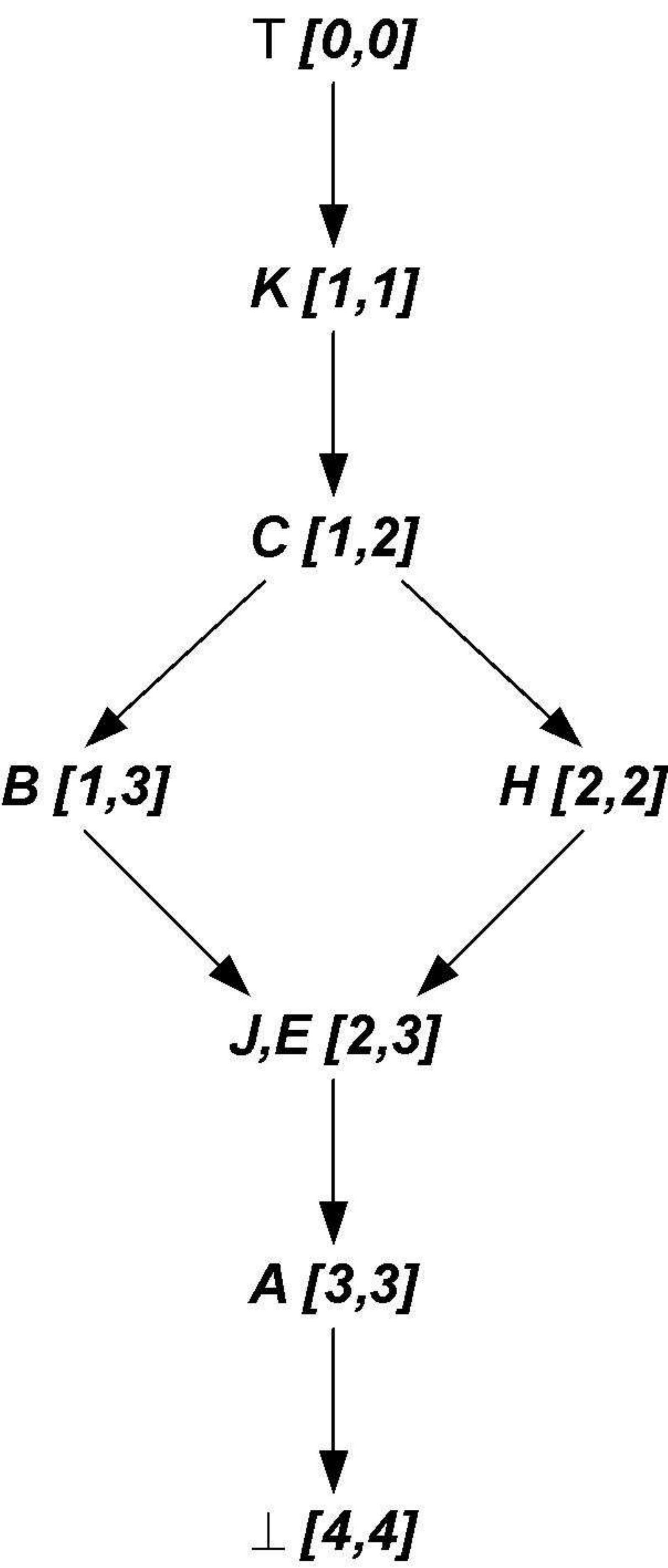}
  \end{center}
  \caption{The homomorphic image $R^+(\poset)$ induced from the
example in \fig{allex2} showing the weak interval order $\le_W$ on the
standard interval ranks $R^+(a) \in \overline{N}$.}\label{homo}
\end{figure}



\fig{homoboth1} now shows the example in \fig{bothRanks} equipped with both
standard interval rank and the edges in the interval rank poset
$R^+(\poset)$, shown in dashed lines, with $J$ and $E$ identified as a new
contracted element in $R^+(\poset)$ with a dashed oval. 
Note how the dashed edges of the interval rank poset proceed vertically very
tightly from top to bottom, linking elements with the closest standard interval
ranks, whether those element pairs are in $\poset$ or not. 

\begin{figure}[htbp]
  \begin{center}
    \includegraphics[scale=0.275]{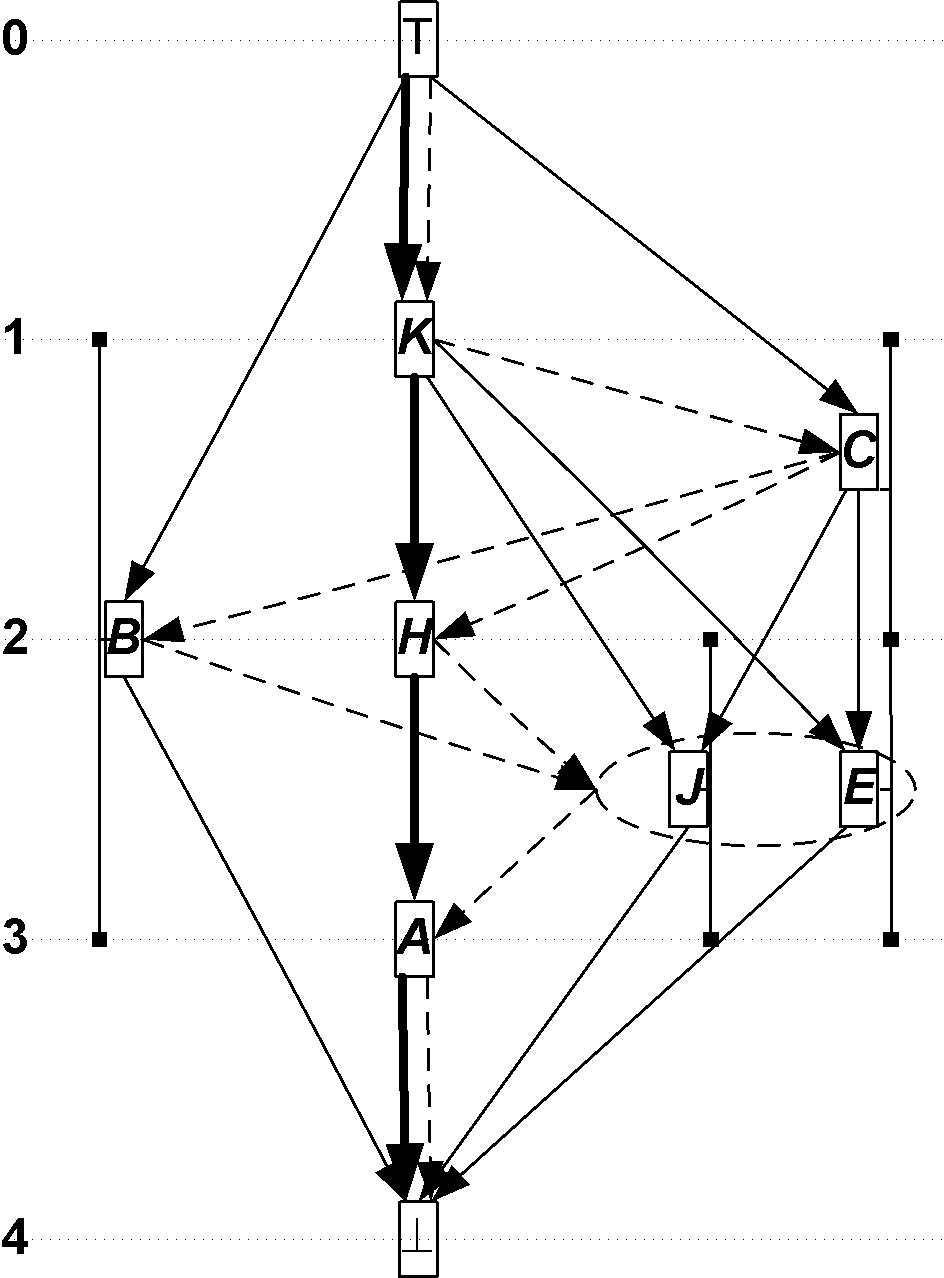}
  \end{center}
  \caption{Example poset from \fig{allex2} equipped with
interval ranks $R^+(a)$, homomorphic image links in dashed lines, and
separations and widths on all links.}\label{homoboth1}
\end{figure}


As mentioned in Section \ref{conjIntRank}, we now prove that the homomorphic
image of a poset, $\po$, under the conjugate interval rank $\RplConj$ is
isomorphic to that of the standard interval rank $R^+$. Therefore, it is
enough to study the structure of just the homomorphic image of $R^+$.

\begin{prop}\label{conjIso}
$\RplP \cong \RplConj(\po)$.
\end{prop}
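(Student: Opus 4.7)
The plan is to exhibit an explicit bijection between the interval sets and check that it respects the two different orders used on each side ($\geq_W$ for $\RplP$, $\subseteq$ for $\RplConj(\po)$). The crucial observation is that $R^+$ and $\RplConj$ agree on the left endpoint, namely $\height(\up a)-1$, and their right endpoints are related by the affine involution
\[ \height + \height(\down a) - 2 \;=\; 2(\height-1) - \bigl(\height - \height(\down a)\bigr). \]
So I would define $\func{\phi}{R^+(P)}{\RplConj(P)}$ by $\phi([x,y]) = [x,\,2(\height-1)-y]$ and verify $\phi(R^+(a)) = \RplConj(a)$ pointwise from the definitions.

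Next I would establish that $\phi$ is a bijection. Well-definedness and injectivity both follow immediately from the identity $\phi(R^+(a)) = \RplConj(a)$ together with the fact that $\phi$ (being its own inverse as a map on $\overline{\N}$) is injective on any subset of $\overline{\N}$; equivalently, one shows $R^+(a)=R^+(b)$ iff $\height(\up a)=\height(\up b)$ and $\height(\down a)=\height(\down b)$ iff $\RplConj(a)=\RplConj(b)$. Surjectivity is by construction.

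Then I would verify order preservation. Unwinding definitions, $R^+(a) \geq_W R^+(b)$ is equivalent to
\[ \height(\up a) \geq \height(\up b) \quad\text{and}\quad \height - \height(\down a) \geq \height - \height(\down b), \]
i.e.\ $\height(\up a)\geq \height(\up b)$ and $\height(\down a)\leq \height(\down b)$. Similarly, $\RplConj(a) \subseteq \RplConj(b)$ means $\rConjT(a)\geq \rConjT(b)$ and $\rConjB(a)\leq \rConjB(b)$, which likewise reduces to $\height(\up a)\geq \height(\up b)$ and $\height(\down a)\leq \height(\down b)$. So both inequalities are controlled by the same pair of conditions on the underlying heights, whence $R^+(a) \geq_W R^+(b) \iff \phi(R^+(a)) \subseteq \phi(R^+(b))$, giving the order isomorphism.

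There is no real obstacle here — the content is essentially the remark that flipping the right endpoint converts the product order $\geq\times\geq$ on endpoints into the conjugate product order $\geq\times\leq$, which is exactly the difference between $\geq_W$ and $\subseteq$. The only care needed is to make sure the affine flip lands in the correct range $\overline{2(\height-1)}$, which is immediate from Proposition \ref{conjugate}(ii).
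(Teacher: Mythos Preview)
Your proposal is correct and follows essentially the same approach as the paper: both define the explicit map $[x,y]\mapsto[x,\,2(\height-1)-y]$ from $\RplP$ to $\RplConj(\po)$ and verify it is a surjective order embedding. The only cosmetic differences are that the paper checks the order-embedding condition directly at the level of interval endpoints (rather than reducing to conditions on $\height(\up\cdot)$ and $\height(\down\cdot)$ via representatives), and it obtains injectivity implicitly from the order-embedding property rather than from the involution observation.
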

\begin{proof}
To show that these two posets are isomorphic we will show that there is an
order embedding, $\varphi : \RplP \rightarrow \RplConj(\po)$, that is
surjective. Define
\begin{alignat*}{4}
\varphi: &\RplP &\longrightarrow &\RplConj(\po)\\
		 &[x,y] &\longmapsto     &[x,2(\height-1)-y].
\end{alignat*}

We will first show that $\varphi$ is surjective, i.e., that for every $[a,b]
\in \RplConj(\po)$ there is an $[x,y] \in \RplP$ so that $\varphi([x,y]) =
[a,b]$. Consider $[a,b] \in \RplConj(\po)$. Then there is some $p \in \po$
such that $\height(\upp)-1 = a$ and $\height+\height(\dnp)-2 = b$. Let $[x,y]
= [\height(\upp)-1, \height-\height(\dnp)]$. Clearly this is an element of
$\RplP$ by the definition of the $R^+$ operator; in fact, it is $R^+(p)$.
Then the image of $[x,y]$ under $\varphi$ is
\begin{align*}
\varphi([x,y]) &= \varphi([\height(\upp)-1,\height-\height(\dnp)])\\
    &= [\height(\upp)-1,2(\height-1)-(\height-\height(\dnp))] = [a,b]
\end{align*}
	So, for every $[a,b] \in \RplConj(\po)$ we have an $[x,y] \in \RplP$ such
that $\varphi([x,y]) = [a,b]$. Therefore, $\varphi$ is surjective.

To show that $\varphi$ is an order embedding we must show that for all
$[x,y], [z,w] \in \RplP$ we have $[x,y] \geq_W [z,w] \iff \varphi([x,y])
\subseteq \varphi([z,w])$. First we show the forward direction. Assume we
have $[x,y] \geq_W [z,w]$, then $x \geq z$ and $y \geq w$. We map these
intervals to $\varphi([x,y]) = [x,2(\height-1)-y] = [a,b]$ and
$\varphi([z,w]) = [z,2(\height-1)-w]=[c,d]$. Obviously $a \geq c$ since $a=x$
and $c=z$. Then, since $y \geq w$ we have $-y \leq -w$ which means $b \leq
d$. Putting this together we have $\varphi([x,y])=[a,b] \subseteq
[c,d]=\varphi([z,w])$ which proves the forward implication.

Now, we assume that $[a,b]=\varphi([x,y]) \subseteq \varphi([z,w])=[c,d]$, so
$a \geq c$ and $b \leq d$. Because $a=x$ and $z=c$ we have $x \geq z$. Then,
$b = 2(\height-1)-y \leq 2(\height-1)-w = d$ so $-y \leq -w$ and then $y \geq
w$. Putting this together we get $[x,y] \geq_W [z,w]$ as desired.

\end{proof}

\subsection{Properties of the Interval Rank Poset} \label{properties}

We now consider the interval rank structure of $R^+(\poset)$, the homomorphic
image of $\po$, itself, and further iterations, $R^+(R^+( \ldots R^+(\poset)
\ldots ))$, thereof.

First, when we compare $\po$ to its homomorphic image $\RplP$ we observe that
the height always increases while the width decreases.

\begin{prop}\label{heightIncr}
The height of the interval rank poset is greater than the height of the
ordered set itself, i.e., $ \height(\RplP) \geq \height(\po)$.
\end{prop}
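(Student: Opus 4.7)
The plan is to exhibit, inside the homomorphic image $\RplP$, an explicit chain of length $\height(\poset)$, from which $\height(\RplP) \ge \height(\poset)$ follows immediately. The natural candidate is the image under $\Rpl$ of any spindle chain of $\poset$.

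First I would pick a spindle chain $C \in \spindle(\poset)$, so that $C = \{a_1,\ldots,a_\height\}$ with $\bot = a_1 \cover a_2 \cover \cdots \cover a_\height = \top$. Then I would argue that $\Rpl$ is strictly antitone on each endpoint: by the observation at the end of \sec{OrderedSetsDefns}, $\height(\up \cdot)$ is strictly antitone and $\height(\dn \cdot)$ is strictly isotone, hence $r^t(a)=\height(\up a)-1$ and $r^b(a)=\height-\height(\dn a)$ are both strictly antitone. Equivalently (and consistent with Proposition \ref{EndpointMonotoneStrict}(ii)), $\Rpl \in {\bf S}_{\geq_W}(\poset)$, so whenever $a < b$ in $\poset$ we have $\Rpl(a) >_W \Rpl(b)$ strictly.

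Applying this to the chain $C$ gives $\Rpl(a_1) >_W \Rpl(a_2) >_W \cdots >_W \Rpl(a_\height)$. Strict monotonicity guarantees the $\height$ intervals $\Rpl(a_i)$ are pairwise distinct and pairwise comparable in $\le_W$, so $\{\Rpl(a_1),\ldots,\Rpl(a_\height)\} \subseteq \Rpl(P)$ is a chain of size $\height$ in $\RplP$. Therefore $\height(\RplP) \ge \height(\poset)$.

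There is essentially no obstacle here beyond correctly invoking strict monotonicity on a longest chain of $\poset$; the small conceptual point worth flagging is that $\Rpl$ is only a homomorphism (not an embedding), so the inequality can be strict when elements incomparable in $\poset$ end up comparable in the weak order on their rank intervals — but that phenomenon only contributes extra chains to $\RplP$ and can only increase, not decrease, its height, which is exactly what the later iteration results in \sec{iterating} will exploit.
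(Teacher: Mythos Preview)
Your proof is correct and follows essentially the same approach as the paper: both take a spindle chain of $\po$ and observe that its image under $\Rpl$ is a chain of size $\height(\po)$ in $\RplP$. The only cosmetic difference is that the paper explicitly notes the rank intervals on spindle elements are $[i,i]$ for $0 \le i \le \height(\po)-1$, whereas you deduce distinctness of the images via the abstract strict monotonicity of $\Rpl$.
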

\begin{proof}
Let $S$ be a spindle chain in $\po$. The elements in $S$ have rank intervals
$[i,i]$ where $i$ takes all integer values between 0 and $\height(\po)-1$
(inclusively). The image of this spindle chain $S$ under $R^+$ is a (not
necessarily saturated) chain, $S'$, in $\RplP$. From the definition of the
height of a poset we have that
\[\forall C \in {\cal C}(\RplP), \quad  \height(\RplP) \geq |C|.\]
Thus, $\height(\RplP)\geq|S'| = |S| = \height(\po)$.
\end{proof}

\begin{prop}\label{widthDecr}
The width of the interval rank poset is less than the width of the ordered
set itself, i.e., $\width(\RplP) \leq \width(\po)$.
\end{prop}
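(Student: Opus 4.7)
The plan is to exhibit, from any antichain in the interval rank poset $\RplP$, an antichain in $\po$ of at least the same size, and then take the maximum. The idea is just to lift by choosing preimages under $\Rpl$.

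First I would pick a maximum antichain $A' \subseteq \Rpl(P)$ in $\RplP$, so $|A'| = \width(\RplP)$. Since $\Rpl : P \to \Rpl(P)$ is surjective (by construction of the homomorphic image), for each interval $I \in A'$ I can choose some $p_I \in P$ with $\Rpl(p_I) = I$. Let $A = \{ p_I : I \in A' \} \subseteq P$. Because distinct intervals $I \neq J$ force distinct representatives $p_I \neq p_J$ (otherwise $I = \Rpl(p_I) = \Rpl(p_J) = J$), we have $|A| = |A'|$.

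The key step is to verify that $A$ is an antichain in $\po$. Suppose for contradiction that there were $p_I, p_J \in A$ with $p_I < p_J$. By Definition~\ref{IntRank}, $\Rpl$ is a strict order homomorphism $\po \to \tup{\overline{\N}, \geq_W}$, so $\Rpl(p_I) >_W \Rpl(p_J)$ in $\overline{\N}$; equivalently, $I$ and $J$ are distinct and comparable under $\leq_W$. But this contradicts $A' \ni I, J$ being an antichain in $\RplP = \tup{\Rpl(P), \leq_W}$. The same argument rules out $p_J < p_I$, so any two distinct elements of $A$ are incomparable in $\po$, and $A$ is an antichain. Hence $\width(\po) \geq |A| = |A'| = \width(\RplP)$.

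There is no real obstacle here; the only point that needs care is the step ``distinct representatives for distinct intervals,'' which is immediate because $\Rpl$ is a function (even though it need not be injective on $P$). Note that the argument uses only that $\Rpl$ is a strict order homomorphism and is surjective onto its image, so the same proof would apply to any strict order homomorphism, not just $\Rpl$.
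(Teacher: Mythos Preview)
Your proof is correct and follows essentially the same approach as the paper: lift an antichain from $\RplP$ back to $\po$ via preimages under $\Rpl$, and use that $\Rpl$ is a strict order homomorphism to show the lifted set is still an antichain. The only cosmetic difference is that the paper takes the \emph{full} preimage $\preA = \{p \in P : \Rpl(p) \in A\}$ (which may be strictly larger than $|A|$), while you pick a single representative per interval; both variants yield the same inequality $\width(\RplP) \le \width(\po)$ by the same mechanism.
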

\begin{proof}
Let $\mathcal{A}$ be the set of all antichains in $\poset$, not necessarily
maximal; and $\mathcal{A}^+$ the set of all antichains in $\RplP$, also not
necessarily maximal. Let $A \in \mathcal{A}^+$. Consider the set of preimages
of elements in $A$ w.r.t. the map $R^+$,
\[\preA = \{p \in \po : R^+(p)\in A\}.\]
Note that it is of course possible that $|\preA| \geq |A|$. We claim that
$\preA \in \mathcal{A}$. If not then there are two elements, $p,q\in \preA$
such that $p< q$. Since $R^+$ is an order homomorphism we have that $R^+(p) <
R^+(q)$. But this is a contradiction to $A$ being an antichain, so $\preA$
must be an antichain in $\po$. Therefore we have the following chain of
inequalities on width:
\begin{align*}
\width(\RplP) = \max_{A \in \mathcal{A}^+} |A| &\leq \max_{A \in \mathcal{A}^+} |\preA|\\
        &\leq \max_{A' \in \mathcal{A}} |A'| = \width(\po).
\end{align*}
\end{proof}

Notice that in the proof of Proposition \ref{heightIncr} we were able to
start with a chain (namely the spindle chain) in $\po$ and use the fact that
its image in $\RplP$ is a chain. This is because $\Rpl$ is an order morphism
to the reversed interval order, so the images of comparable elements are
comparable. However, there is no equivalent property that we could use in the
proof of Proposition \ref{widthDecr}. Every antichain in $\mathcal{A}^+$ must
come from an antichain in $\mathcal{A}$, but some antichains in $\mathcal{A}$
map to non-antichains in $\RplP$.

Given Theorem \ref{dimN}, which states that an $n$ dimensional poset is one
which is an induced subposet of $\R^n$ with ordering relation $\leq^n$, but
not an induced subposet of $\R^{n-1}$, we can now easily see that the
dimension of $\RplP$ is at most 2.

\begin{cor}\label{dim2} For any poset $\po = \tup{P,\leq}$ its homomorphic image under the standard
interval rank function, $R^+(\po)$, has
\[\dim(R^+(\po)) \leq 2\]
\end{cor}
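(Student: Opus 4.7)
The plan is to derive this as an essentially immediate consequence of Theorem \ref{dimN} combined with the identification of the weak interval order $\leq_W$ with the product order on $\R^2$. First I would recall from Definition \ref{weak} that for real intervals $\bar{x} = [x_*, x^*]$, the relation $\bar{x} \leq_W \bar{y}$ holds iff $x_* \leq y_*$ and $x^* \leq y^*$, which is precisely the product order on the pair of endpoints.

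Next, I would construct the obvious embedding $\func{\psi}{R^+(P)}{\R^2}$ sending each interval $[r^t(a), r^b(a)] \in R^+(P)$ to the point $(r^t(a), r^b(a)) \in \R^2$. Since $R^+(\po) = \tup{R^+(P), \leq_W}$ by definition, the equivalence of $\leq_W$ with the product order $\leq \times \leq$ on endpoints means that $\psi$ is an order embedding of $R^+(\po)$ into $\tup{\R^2, \leq \times \leq}$. The image $\psi(R^+(P))$ lies in the half-plane $\{(x,y) \in \R^2 : x \leq y\}$, but this is irrelevant: it is still an induced subposet of $\R^2$ under the product order.

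Applying Theorem \ref{dimN} with $t = 2$, we conclude that $\dim(R^+(\po)) \leq 2$, which is the desired statement.

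There is really no main obstacle here; the result is a direct structural consequence of the fact that every element of $R^+(\po)$ is, by construction, a pair of integers compared coordinate-wise. The only mild subtlety is being careful that the poset structure on $R^+(P)$ is the one induced by $\leq_W$ (not by $\geq_W$, which governs $R^+$ as a morphism \emph{from} $\po$); this distinction matters for the direction of the embedding $\psi$ but not for the dimension bound, since $\tup{\R^2, \leq \times \leq}$ and its dual have the same dimension.
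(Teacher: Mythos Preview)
Your approach is essentially the same as the paper's: embed $R^+(\po)$ into $\R^2$ with the product order and invoke Theorem~\ref{dimN}, then handle the $\leq_W$ vs.\ $\geq_W$ discrepancy via the fact that a poset and its dual have the same dimension. One minor slip: you assert that $R^+(\po) = \tup{R^+(P), \leq_W}$ ``by definition,'' but in this paper the homomorphic image carries the target order $\geq_W$ (see the opening of Section~\ref{structureRpl}); your own closing remark on duality is exactly what is needed to fix this, and indeed the paper's proof proceeds by the same dual-dimension argument.
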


\begin{proof}
By definition, $\RplP$ is a subposet of $\R^2$ with the reversed product
order, so by Theorem \ref{dimN} we see that $\RplP^*$ (the dual of $\RplP$)
has dimension at most 2. It's clear that a poset and its dual have the same
dimension, so we have that $\dim(\RplP) \leq 2$.
\end{proof}

Note that it may be the case that $\RplP$ has dimension one (i.e., it is a
chain). For example, the following proposition gives a particular sufficient
condition on $\po$ for $\RplP$ to be a chain.

\begin{prop}\label{gradedChain}
If $\po$ is graded then $\RplP$ is a chain.
\end{prop}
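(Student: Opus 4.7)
The plan is to exploit the defining property of gradedness to force every standard interval rank $R^+(a)$ to be a degenerate (singleton) interval. Once every value of $R^+$ lies on the diagonal $\{[k,k] : 0 \le k \le \height - 1\}$, two such singletons satisfy $[k,k] \le_W [j,j] \iff k \le j$, so they are always comparable and $\RplP$ is automatically totally ordered, i.e.\ a chain.

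The key step is the identity $\height(\up a) + \height(\down a) = \height + 1$ for every $a \in P$. The inequality $\le$ holds in any bounded poset: concatenating a longest chain in $\down a$ with a longest chain in $\up a$ at the common endpoint $a$ yields a chain of $\poset$ of size $\height(\up a) + \height(\down a) - 1 \le \height$. For the reverse inequality I would use gradedness in two places: the sub-posets $\up a$ and $\down a$ inherit a rank function from $\poset$ and are therefore graded as well, so any saturated chain in $\down a$ from $\bot$ to $a$ has length exactly $\height(\down a)$, and likewise for $\up a$. Gluing two such saturated chains at $a$ produces a saturated chain of $\poset$ from $\bot$ to $\top$ of length $\height(\down a) + \height(\up a) - 1$. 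Such a chain is maximal in $\poset$, and in a graded poset every maximal chain has length $\height$, so $\height(\down a) + \height(\up a) - 1 = \height$.

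Armed with this identity, I compute $r^t(a) = \height(\up a) - 1 = \height - \height(\down a) = r^b(a)$ for every $a \in P$, so $R^+(a) = [k(a), k(a)]$ is a singleton interval with $k(a) \in \{0, 1, \ldots, \height - 1\}$. The image $R^+(P)$ thus lies entirely on the diagonal of $\overline{\height - 1}$, which is totally ordered by $\le_W$, and therefore $\RplP$ is a chain. The main obstacle is establishing $\height(\up a) + \height(\down a) = \height + 1$; once that is in place, the rest of the argument is a one-line calculation.
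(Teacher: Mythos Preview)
Your argument is correct and follows essentially the same route as the paper: show that in a graded poset every standard interval rank is a degenerate interval $[k,k]$, then observe that such intervals are totally ordered by $\le_W$. The paper packages the first step by citing Proposition~\ref{prop1} (width zero iff spindle element) together with the remark that in a graded poset every maximal chain has length $\height$ and hence every element is a spindle element, whereas you unpack the same content directly into the identity $\height(\up a)+\height(\down a)=\height+1$; the two arguments are the same in substance.
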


In order to prove this we must cite a part of a proposition proved in
\cite{JoCHoE14}.
\begin{prop} \label{prop1}
Let $\po=\tup{P,\leq}$ be a poset such that $\po$ is bounded, and $|P| \geq
2$. For an element $a \in P$, the width of its standard interval rank is zero iff
$a$ is a spindle element. I.e., $W(R^+(a)) = 0 \iff a \in I(\poset)$.
\end{prop}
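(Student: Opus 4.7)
The plan is to reduce the biconditional to a purely combinatorial equality between chain lengths, and then recognize it as the equality case of a simple universal inequality. First, I would unfold the width using the definition of $R^+$: since $R^+(a) = [\height(\up a)-1,\, \height - \height(\down a)]$,
\[ W(R^+(a)) = \bigl(\height - \height(\down a)\bigr) - \bigl(\height(\up a) - 1\bigr) = \height + 1 - \bigl(\height(\up a) + \height(\down a)\bigr). \]
So $W(R^+(a)) = 0$ is equivalent to $\height(\up a) + \height(\down a) = \height + 1$, and the proposition reduces to showing that this identity characterizes exactly the spindle elements.

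The crux is the universal inequality $\height(\up a) + \height(\down a) \leq \height + 1$, valid for every $a \in P$. To see it, I pick a maximum chain $C_\uparrow \subseteq \up a$ with $|C_\uparrow| = \height(\up a)$ and a maximum chain $C_\downarrow \subseteq \down a$ with $|C_\downarrow| = \height(\down a)$; both necessarily contain $a$ since $a$ is the minimum of $\up a$ and the maximum of $\down a$. Their union $C := C_\uparrow \cup C_\downarrow$ is a chain in $P$: any $x \in C_\uparrow$ and $y \in C_\downarrow$ satisfy $y \leq a \leq x$ by transitivity, while elements within a single $C_\uparrow$ or $C_\downarrow$ are already comparable. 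By antisymmetry, $\up a \cap \down a = \{a\}$, so $C_\uparrow \cap C_\downarrow = \{a\}$ and hence $|C| = \height(\up a) + \height(\down a) - 1 \leq \height$.

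Both directions of the biconditional then drop out. For ($\Leftarrow$), if $a \in I(\poset)$, take any spindle chain $D$ with $|D| = \height$ containing $a$ and split it at $a$ into $D \cap \down a$ and $D \cap \up a$; these are chains in $\down a$ and $\up a$ overlapping only at $a$, so
\[ \height(\up a) + \height(\down a) \geq |D \cap \up a| + |D \cap \down a| = |D| + 1 = \height + 1, \]
and combining with the universal bound forces equality, i.e.\ $W(R^+(a)) = 0$. For ($\Rightarrow$), if $W(R^+(a)) = 0$ then the chain $C$ constructed above has size exactly $\height$, so it is a spindle chain containing $a$, witnessing $a \in I(\poset)$.

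The substantive step is the universal inequality, and within it the only real obstacle is verifying that $C_\uparrow \cup C_\downarrow$ is a chain and that the two maximum chains overlap in exactly $\{a\}$. Both points are immediate from antisymmetry and transitivity once one recognizes that $\up a \cap \down a = \{a\}$; the rest is bookkeeping with the definitions of $R^+$, $\height$, and $I(\poset)$.
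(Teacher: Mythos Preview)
Your argument is correct. The paper does not actually prove this proposition here; it is quoted from \cite{JoCHoE14} and used as a black box in the proof of Proposition~\ref{gradedChain}. That said, the paper does assert the very inequality you isolate---namely $\height(\up a)+\height(\down a)\le \height+1$ with equality iff $a\in I(\poset)$---inside the proof of Proposition~\ref{conjugate}(i), without supplying the chain-splicing justification you give. So your approach is the natural one and matches what the authors invoke; you have simply filled in the details (union of maximum chains in $\up a$ and $\down a$ is a chain, overlaps only at $a$, and a chain of size $\height$ is automatically maximal and hence a spindle chain) that the paper takes for granted.
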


\begin{proof}[Proof of Proposition \ref{gradedChain}]
Every element in a graded poset sits on a spindle chain. This is simply because
in a graded poset every maximal chain is the same length. Now, from
Proposition \ref{prop1} we know that if $a \in I(\po)$ then $W(R^+(a))=0$.
Therefore, the only elements of $\RplP$ are the trivial intervals $[i,i]$ for
$0\leq i \leq \height-1$. This set forms a chain under $\geq_W$ (the dual of
the product order on $\R^2$).
\end{proof}

However, there are ungraded posets, $\po$, for which $\RplP$ is a chain. For
example, the poset $N_5$, consisting of a length 4 chain and a length 3 chain
which share their top and bottom elements
, is ungraded and $R^+(N_5)$ is a
chain. Also, whether the dimension of $\RplP$ is 2 or 1 does not depend on
the dimension of $\po$. There are posets of dimension greater than 2 whose
interval rank poset has dimension 1: any boolean $n$-cube has dimension $n$
and is graded, thus its interval rank poset is a chain. In addition, there
are posets of dimension 2 whose interval rank poset also has dimension 2.

\subsection{Iterating $\Rpl$}\label{itrRpl} \label{iterating}

The fact that height is non-decreasing and width non-increasing from $\po$ to
$\Rpl$ leads us to ask the following question: what happens when we
repeatedly apply the $\Rpl$ operator? If height \emph{strictly} increases and
width \emph{strictly} decreases then it's clear that we end up with a chain
if we apply $\Rpl$ enough times. However, Propositions \ref{heightIncr} and
\ref{widthDecr} cannot be reformulated with strict inequalities so this
chain conjecture is not obvious. Based on experimental evidence (see Section
\ref{ExpAspect}) it appeared that when we apply $\Rpl$ enough times the
result is a chain. This turned out to be true, which we now prove.

For Lemma \ref{AllEventuallyGrad} we will need to define the poset
$\RplAllP$. See \fig{allexrank_int2} for an example, and compare to Figure
\ref{homo} which contains $\RplP$ for the same $\po$.

\begin{defn}
Given a poset $\po = \tup{P,\leq}$ define $\Rpl_{All}(\po) =
\tup{P,\leq_{R_A}}$ where $p <_{R_A} q$ iff $\Rpl(p) >_W \Rpl(q)$. Notice
that $<_{R_A}$ is a strict order, so we must take its reflexive closure to
create $\leq_A$. This is just $\RplP$ without identifying elements that have the
same interval rank. So if $\Rpl(p) = \Rpl(q)$ for $p,q \in \po$ with $p \neq
q$ we retain both elements and make them incomparable.
\end{defn}

\mytikz{\RplAllEx}{>=latex, line width=0.75pt}{allexrank_int2}{An example of
$\RplAllP$ where $\po$ is the poset found in \fig{allex2}. Compare to $\RplP$
found in \fig{homo}.}

Given this definition we will now prove two lemmas that will lead us to the
proof that iterating $\Rpl$ enough times yields a chain.

\begin{lem}\label{AllEventuallyGrad}
Given a poset, $\po=\tup{P,\leq}$, there exists an $m$ such that
$(\Rpl_{All})^m(\po)$ is graded.
\end{lem}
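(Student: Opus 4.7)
The plan is to argue by stabilization. First I will show that the partial order on $P$ is non-decreasing under $\Rpl_{All}$: writing $\leq^{(m)}$ for the relation of $\po_m := (\Rpl_{All})^m(\po)$, the fact that $\Rpl$ is a strict order homomorphism $\po_m \to \tup{\overline{N},\geq_W}$ immediately gives $\leq^{(m)} \subseteq \leq^{(m+1)}$. Since $P \times P$ is finite, the sequence must stabilize, so there exists $M$ with $\po_M = \Rpl_{All}(\po_M)$, and it then suffices to prove that every fixed point of $\Rpl_{All}$ is graded.

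So assume $\po = \Rpl_{All}(\po)$. The strategy is to force every element to have a trivial (width-zero) interval rank, after which $\rho(a) := r^t(a) = r^b(a)$ will furnish a rank function. First, any maximum-length chain in $\po$ consists of $\height$ spindle elements, so by Proposition \ref{prop1} their ranks are diagonal points $(k,k)$; since they are strictly decreasing in $>_W$ from $(\height-1,\height-1)$ at $\bot$ to $(0,0)$ at $\top$, they must realize the full diagonal $(0,0),(1,1),\ldots,(\height-1,\height-1)$, all of which therefore lie in $\Rpl(P)$.

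Now suppose for contradiction that some $a \in P$ has $\Rpl(a) = (x,y)$ with $x < y$. Since $\po = \Rpl_{All}(\po)$, the order on $\po$ is the pullback of $\geq_W$ along $\Rpl$, so the strictly $>_W$-decreasing sequence $(x,y),(x,x),(x-1,x-1),\ldots,(0,0)$ --- whose terms all lie in $\Rpl(P)$ by the previous paragraph --- lifts to a chain of $x+2$ distinct elements in $\up a$. This contradicts $\height(\up a) = r^t(a)+1 = x+1$. Hence every interval is trivial, and the order reduces to $p \leq q \iff \rho(p) \geq \rho(q)$, a linear stack of antichains, which is plainly graded.

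The main obstacle I expect lies in the final step: having every element be a spindle element (equivalently, every interval trivial) does not by itself force gradedness, as one can build a poset in which every element lies on some maximum-length chain even though some other maximal chain is strictly shorter than $\height$. What rescues the argument is the extra strength of the fixed-point hypothesis, which forces the order on $\po$ to coincide with the order induced by $\rho$; only then does the layered structure become unambiguously graded.
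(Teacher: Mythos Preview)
Your proof is correct. The organization differs slightly from the paper's but the underlying idea is the same.

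The paper argues directly that whenever the current poset is ungraded, applying $\Rpl_{All}$ adds at least one new comparable pair: pick $p$ with $\Rpl(p)=[x,y]$, $x\neq y$, and a spindle element $q$ with $\Rpl(q)=[x,x]$; then $p\nsim q$ in $\po$ (equal top ranks) while $q>_{R_A}p$. Since the number of comparable pairs is bounded by $\binom{|P|}{2}+|P|$, the process must reach a graded poset.

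You instead first observe monotonicity of the relations and pass to a fixed point, then derive a contradiction from a hypothetical non-diagonal rank. Your chain $(x,y)>_W(x,x)>_W\cdots>_W(0,0)$ is more than you need---already $a<q_x$ at the fixed point contradicts $r^t(a)=r^t(q_x)$, which is exactly the paper's incomparability observation turned around. What your organization buys is the explicit identification of the fixed points of $\Rpl_{All}$ as ``stacks of antichains'' (posets whose order is the pullback of the integer order along a surjection $P\to\{0,\dots,\height-1\}$); the paper's version buys a slightly more quantitative statement, namely that the number of iterations is bounded by the number of missing comparable pairs. Your final paragraph correctly isolates the one genuinely delicate point: trivial interval ranks alone do not force gradedness, and it is the fixed-point identity $\leq\;=\;\leq_{R_A}$ that closes the gap.
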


\begin{proof}
Fix an ungraded poset, $\po$, and consider the set of graded posets that
extend $\po$,
\[G(\po) = \{ \qo = \tup{P,\leq_{\qo}}:~ \leq~ \subset~ \leq_{\qo}, \qo \text{ graded}\}.\]
That is, if $p \leq q$ then $p \leq_{\qo} q$. Clearly this set is nonempty
since there is at least one linear extension of $\po$ (recall the definition
of a linear extension from Section \ref{OrderedSetsDefns}), and linear
extensions, being chains, are graded. Also, the number of comparisons
(ordered pairs $(a,b)$ such that $a\leq b$) in a total order only depends on
the number of elements in the chain. All pairs of elements are comparable,
and we have the reflexive comparisons, so
\[ \max_{\qo \in G(\po)} |\leq_{\qo}| = \binom{|P|}{2}+|P| = \frac{|P|^2+|P|}{2}.\]
Also, any poset $\mathcal{L} = \tup{P,\leq_{\mathcal{L}}}$ such that
$|\leq_{\mathcal{L}}|=\frac{|P|^2+|P|}{2}$ is a total order.

Now we claim that $\po$ being ungraded implies $|\leq| < |\leq_{R_A}|$. So
when we apply the $\Rpl_{All}$ operation we always end up with strictly more
comparable pairs of elements. If this is true then either
\begin{enumerate}
  \item at some point, iterating $\Rpl_{All}$ yields a graded poset whose
      partial order has strictly less than $\frac{|P|^2+|P|}{2}$ elements,
      or
  \item after iterating $\Rpl_{All}$ enough times we will get a partial
      order with exactly $\frac{|P|^2+|P|}{2}$ comparisons.
\end{enumerate}
We will show that every time we iterate $\Rpl_{All}$ on an ungraded poset we
add at least one comparison. If we iterate $\Rpl_{All}$ and get to
$\frac{|P|^2+|P|}{2}$ without hitting a graded poset up to this point, then
we are in case 2. Otherwise, we got to a graded poset with strictly less than
$\frac{|P|^2+|P|}{2}$ elements, and are in case 1. Either way, there is an
$m$ such that $(\Rpl_{All})^m(\po)$ is graded.

Finally, we must prove that $\po$ being ungraded implies $|\leq| <
|\leq_{R_A}|$, i.e., that we gain at least one comparison. Choose $p \in P$
with $\Rpl(p)=[x,y]$ for some $x,y\in \N$ such that $x\neq y$. There must be
at least one $p$ (if not, then $\po$ is graded). Then, choose $q \in P$ with
$\Rpl(q)=[x,x]$. Again, there must be at least one since $0\leq x\leq
\height(\po)-1$ and all $[z,z]$ with $0\leq z\leq \height(\po)-1$ are
represented on a spindle chain. Clearly, $p \nsim q$ in $\po$ because
$\height(\uparrow p)=\height(\uparrow q)$. But in $\Rpl_{All}(\po)$ we have
$q >_{R_A} p$ since $[x,x]\leq_W[x,y]$. So going from $\leq$ to $\leq_{R_A}$
we added at least one comparison.
\end{proof}

\begin{lem}\label{AllGradImpliesGrad}
If $(\Rpl_{All})^m(\po)$ is graded then $(\Rpl)^m(\po)$ is graded.
\end{lem}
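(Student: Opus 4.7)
The plan is to exhibit $(\Rpl)^m(\po)$ as a quotient of $(\Rpl_{All})^m(\po)$ by a natural surjective order-homomorphism whose fibers lie entirely in single rank levels of the source; the rank function of the source then descends to make the quotient graded.

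To build this map I would inductively define $\phi_m : (\Rpl_{All})^m(\po) \to (\Rpl)^m(\po)$ sending each $p \in P$ to its $m$-fold iterated interval rank. Writing $\po^k = (\Rpl)^k(\po)$ and $\po^k_A = (\Rpl_{All})^k(\po)$, set $\phi_0 = \mathrm{id}$ and $\phi_{k+1}(p) = \Rpl_{\po^k}(\phi_k(p))$. The key technical identity to maintain by induction is $\Rpl_{\po^k_A}(p) = \Rpl_{\po^k}(\phi_k(p))$; this amounts to showing that $\phi_k$ is height-preserving on both $\up$ and $\down$, which in turn follows from the fact that, by the very definition of $\leq_{R_A}$, the fibers of $\phi_k$ in $\po^k_A$ are antichains in which every two elements share identical strict up-sets and down-sets (the membership test $p \leq_{R_A} r$ depends only on the previous-iterate rank of $p$). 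Once the identity holds, $\phi_m$ is a well-defined surjective order homomorphism.

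With $\phi_m$ in hand, the descent is short. If $\phi_m(p) = \phi_m(q)$ with $p \neq q$, the fiber description yields $\height(\up p) = \height(\up q)$ in $\po^m_A$, so when $\po^m_A$ is graded $p$ and $q$ share a rank. Hence the rank function of $\po^m_A$ descends to a well-defined $\rho$ on $(\Rpl)^m(\po)$. That $\rho$ is a genuine rank function then follows from $\rho(\top) = 0$ (the tops correspond under $\phi_m$, being the unique elements with trivial up-sets in each poset) and from covers in $(\Rpl)^m(\po)$ lifting to covers in $\po^m_A$: given $\alpha \cover \beta$, pick preimages $p, q$ with $\phi_m(p) = \alpha$, $\phi_m(q) = \beta$; then $p <_{R_A} q$, and any strictly intermediate $r$ in $\po^m_A$ would satisfy $\phi_m(r) \neq \alpha, \beta$ (else $r$ would be comparable to an element of its own fiber) and would project to a point strictly between $\alpha$ and $\beta$, violating the cover. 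I expect the main obstacle to be precisely the inductive identity $\Rpl_{\po^k_A}(p) = \Rpl_{\po^k}(\phi_k(p))$: the two towers $\po^k_A$ and $\po^k$ live on different underlying sets, and keeping them in sync under iteration requires the height-preservation step above. Once that is nailed down at every level, the descent argument closes.
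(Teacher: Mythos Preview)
Your approach is essentially the same as the paper's: exhibit $(\Rpl)^m(\po)$ as a quotient of $(\Rpl_{All})^m(\po)$ via the natural surjection and then push the rank function down through that quotient. The paper's proof is a one-paragraph sketch that simply asserts that elements $p_i\in P$ with equal iterated interval rank have the same immediate parents and children in $(\Rpl_{All})^m(\po)$, and hence the same rank there; your proposal is a more careful version of the same idea, making explicit the inductive identity $\Rpl_{\po^k_A}(p)=\Rpl_{\po^k}(\phi_k(p))$ that the paper tacitly uses, and checking that covers lift so the descended function is genuinely a rank function.
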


\begin{proof}
Assume that $(\Rpl_{All})^m(\po)$ is graded. Then there is a rank function,
$r: P \rightarrow \N$ such that if $p\prec q$ in $(\Rpl_{All})^m(\po)$ then
$r(p)= r(q)+1$. Now, there may be some elements, $p_i$ in $P$ such that
$(\Rpl)^m(p_i)$ are all equal. So in $(\Rpl_{All})^m(\po)$ they all have the
same immediate parents ($\{a_j\}$) and children ($\{c_k\}$). Thus, $r(p_i)$
are all equal. So when we collapse all $p_i$ into one element, $p$, in $\RplP$
we have a non-ambiguous rank, $r(p)=r(p_i)$ for all $i$, for $p$.
\end{proof}

\begin{prop}\label{EventuallyChain}
Given a poset, $\po=\tup{P,\leq}$, there exists an $n$ such that
$\left(\Rpl\right)^n(\po)$ is a chain.
\end{prop}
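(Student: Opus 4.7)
The plan is to chain together the two preceding lemmas with Proposition \ref{gradedChain}. The heavy lifting has already been done: Lemma \ref{AllEventuallyGrad} guarantees that iteration of $\Rpl_{All}$ eventually produces a graded poset, Lemma \ref{AllGradImpliesGrad} transfers this gradedness from $\Rpl_{All}$ to $\Rpl$, and Proposition \ref{gradedChain} turns a graded input to $\Rpl$ into a chain output. So the proof is essentially a two-line composition.

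Specifically, I would first invoke Lemma \ref{AllEventuallyGrad} to obtain an integer $m \geq 0$ such that $(\Rpl_{All})^m(\po)$ is graded. Then I would apply Lemma \ref{AllGradImpliesGrad} to conclude that $(\Rpl)^m(\po)$ is graded as well. Finally, setting $n = m+1$ and applying Proposition \ref{gradedChain} to the graded poset $(\Rpl)^m(\po)$ yields that $\Rpl\bigl((\Rpl)^m(\po)\bigr) = (\Rpl)^{n}(\po)$ is a chain.

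The only minor subtlety to check is the degenerate case where $\po$ is itself already a chain or already graded: if $\po$ is graded then Lemma \ref{AllEventuallyGrad} holds with $m=0$ (since a graded poset needs no further iteration), and Proposition \ref{gradedChain} still applies to give $n=1$; if $\po$ is already a chain, then $n=0$ works trivially. No obstacle here really stands out, since the two lemmas were precisely designed to compose in this way; the only step worth pausing on is confirming that Proposition \ref{gradedChain} applies to $(\Rpl)^m(\po)$, which is a bounded finite poset exactly as required by that proposition's hypotheses. Thus no additional machinery beyond the stated results is needed.
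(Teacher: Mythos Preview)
Your proposal is correct and matches the paper's proof essentially line for line: invoke Lemma~\ref{AllEventuallyGrad} to get $m$ with $(\Rpl_{All})^m(\po)$ graded, apply Lemma~\ref{AllGradImpliesGrad} to conclude $(\Rpl)^m(\po)$ is graded, then use Proposition~\ref{gradedChain} to obtain that $(\Rpl)^{m+1}(\po)$ is a chain. Your remarks on the degenerate cases are a harmless addition but not needed for the argument.
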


\begin{proof}
Fix a poset $\po=\tup{P,\leq}$. By Lemma \ref{AllEventuallyGrad} there is an
$m$ such that $(\Rpl_{All})^m(\po)$ is graded. Then by Lemma
\ref{AllGradImpliesGrad}, $(\Rpl)^m(\po)$ is graded. Finally, by Proposition
\ref{gradedChain}, since $(\Rpl)^m(\po)$ is graded we have that
$(\Rpl)^{m+1}(\po)$ is a chain.
\end{proof}

The chain that we arrive at can be thought of as a \emph{total preorder} -- a
reflexive and transitive relation in which every pair of elements is
comparable \cite{DaBPrH90} -- which extends $\po$. Preorders are not
antisymmetric, so when two elements $p$ and $q$ are identified in the final
chain we will say that $p \leq q$ and $q \leq p$, but $p \neq q$. This
resulting total preorder is easily computable, and clearly calls out our
concept of ``levels'' in an ungraded poset. Figure \ref{totalPreorderEx}
shows the final total preorder for the example from \fig{allex2}.


\begin{figure}
  \centering
  \includegraphics{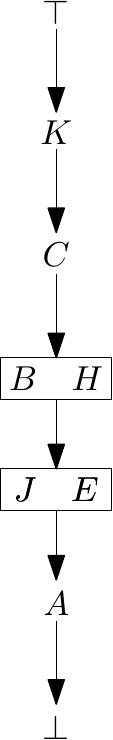}
  \caption{The canonical total preorder for the poset found in \fig{allex2}.
Notice that $B$ and $H$ are identified as well as $J$ and $E$. Looking at
\fig{allex2} it's easy to see why $J$ and $E$ are identified, but not
clear why $B$ and $H$ are.}\label{totalPreorderEx}
\end{figure}

Now that we have established that we always end up with a chain after
iterating $\RplP$ enough times, three interesting questions arise: how many
iterations does it take to end up with a chain, what is the final chain
length, and how much does the height increase. We were able to answer these
questions exactly for all bounded posets of size 3 to 9 (see Section
\ref{ExpAspect} for more on this). The averages\footnote{Note that there is
only one bounded poset of height 9 when we restrict to posets of size between
3 and 9, the chain of length 9. So, the average final height of chain for
bounded posets of size 9 is the average taken over only one sample.} are
collected in Table \ref{iterationsTable}.
\begin{table}[htbp]
\begin{center}
\begin{tabular}{|l||c|c|c|c|c|c|c|}
\hline
Bounded Poset Size				& 3 & 4    & 5     & 6     & 7     & 8     & 9 \\\hline\hline
Avg. \# of elements in chain    & 3 & 3.5  & 4.2   & 4.75  & 5.381 & 5.959 & 6.517  \\ \hline
Avg. \# of iterations to chain  & 0 & 0.5  & 0.8   & 1.00  & 1.127 & 1.236 & 1.335 \\ \hline
\multicolumn{8}{c}{}\\ \hline
Bounded Poset Height	     	& 3 & 4    & 5     & 6     & 7     & 8     & 9 \\\hline\hline
Avg. final height of chain	    & 3 & 4.348 & 6.068 & 7.092 & 7.806 & 8.409 & 9 \\\hline
\end{tabular}
\end{center}
\caption{Averages of iteration data for all bounded posets of size 3
to 9. Note that we are only looking here at posets of size between 3 and 9. In particular,
there is only one poset of size $\leq 9$ with height 9, the chain of length 9. This gives
us a misleading average final height of chain when the starting poset is height 9.}\label{iterationsTable}
\end{table}

For bounded posets of size greater than 9 we cannot generate all posets with
the computing resources available (there are only 2045 bounded posets of size
9 but 16999 of size 10, see A000112 in \cite{oeis} for values for larger
posets). Therefore, we generated many random bounded posets of sizes between
10 and 25 (200 posets of each size) to get similar data. See Section
\ref{ExpAspect} for a discussion on the method used to generate random
posets.

Figures \ref{SizeStats}, \ref{NumItersStats}, and \ref{HeightStats} show the
box plot and regression for the size of $\po$ vs. size of chain, size of
$\po$ vs. number of iterations to a chain, and height of $\po$ vs. height of
chain respectively. From the limited amount of data it appears that the the
number of iterations that must be done to get to the total preorder is
roughly linear in the number of elements in the poset. The compression that
we get in the end is approximately $\ln(n)	$.

\begin{figure}
\centering
\subfloat[Box Plot]{
\includegraphics[width=3.5in]{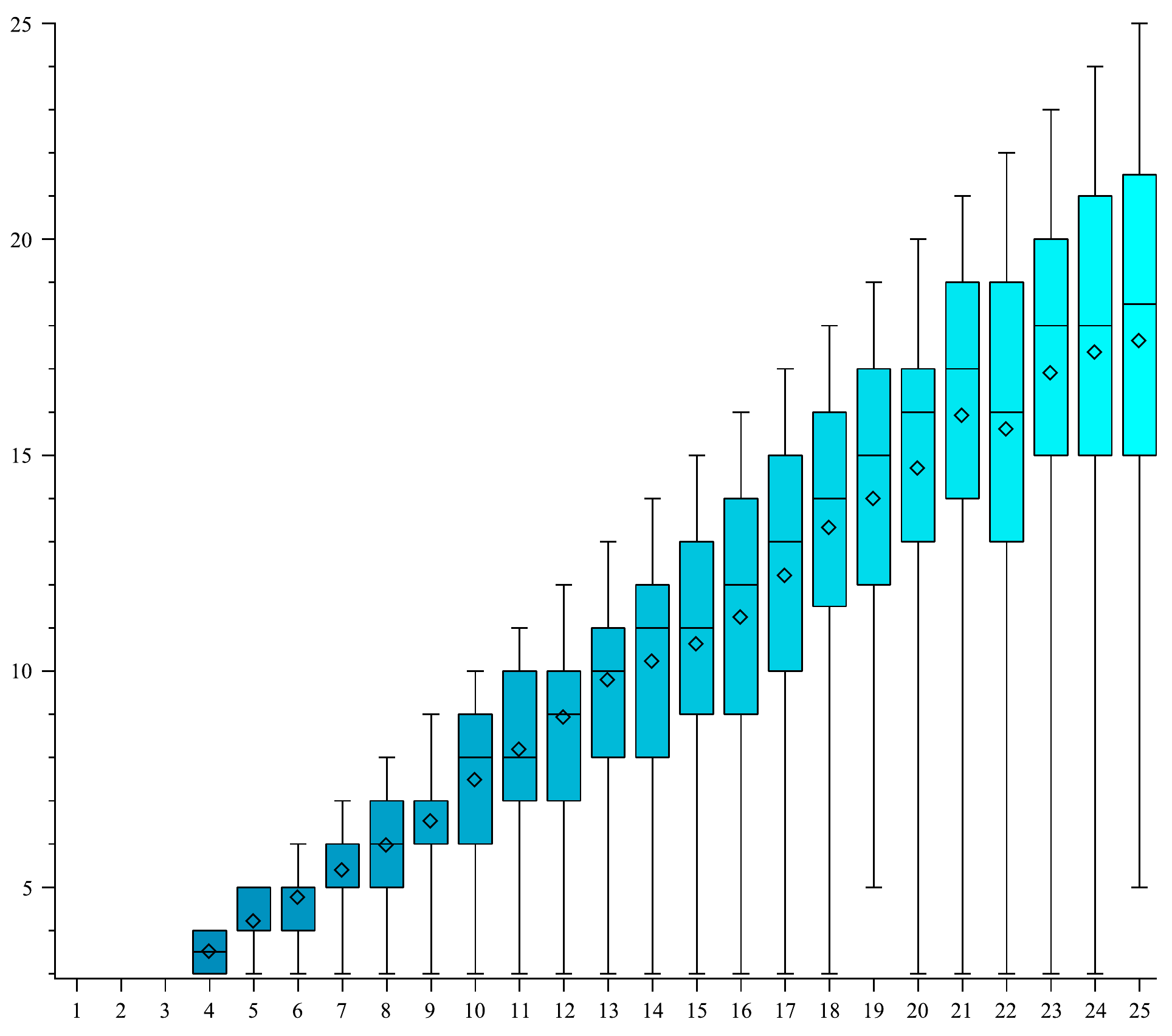}
}

\subfloat[Linear fit for average size of resulting chain as a function of
size of original poset: $y=0.7010 x+0.4854$, $R^2 = 0.9964$]{
\includegraphics[width=3.5in]{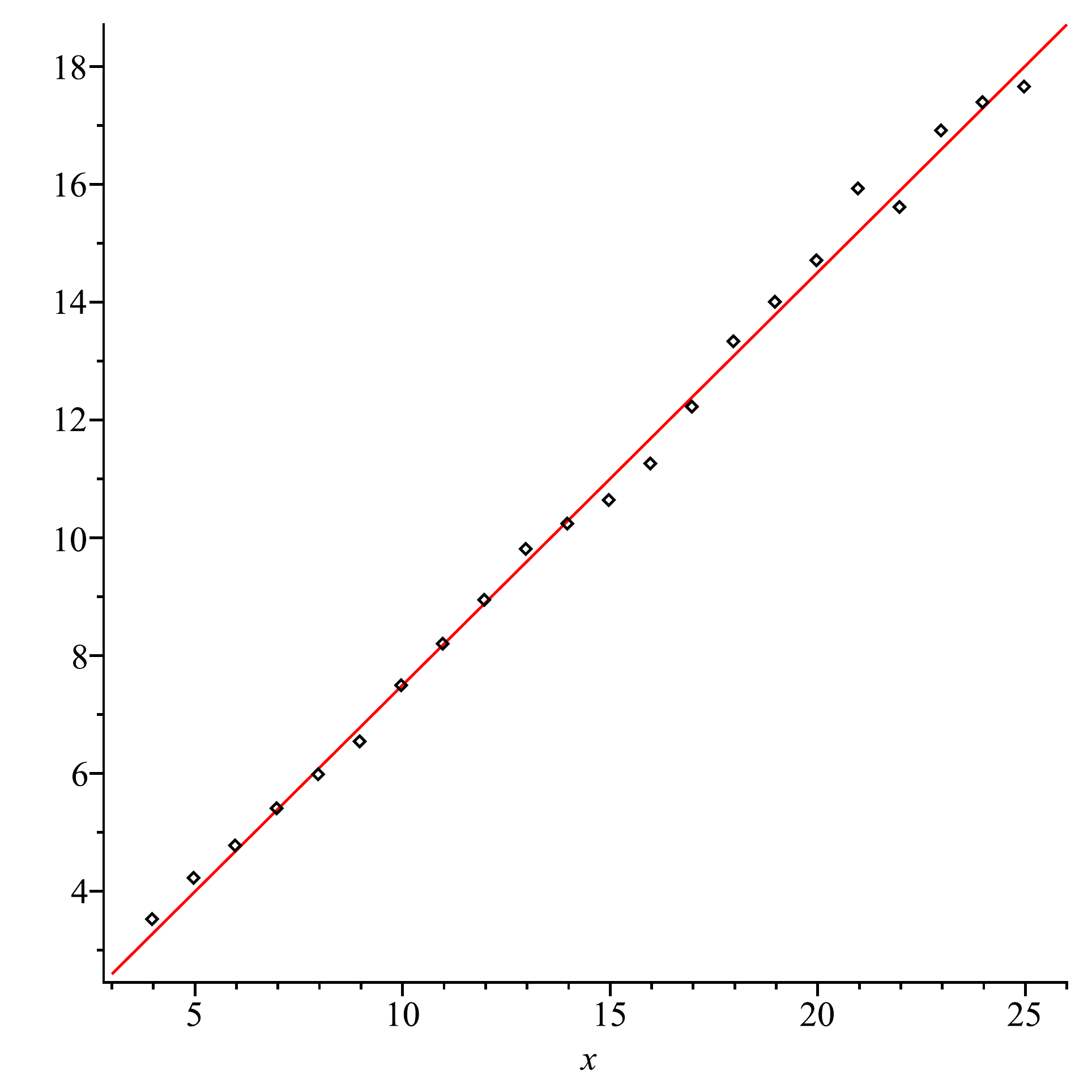}
} \caption{Statistics for the ``size of poset vs. size of resulting chain"
data}\label{SizeStats}
\end{figure}

\begin{figure}
\centering
\subfloat[Box Plot]{
\includegraphics[width=3.5in]{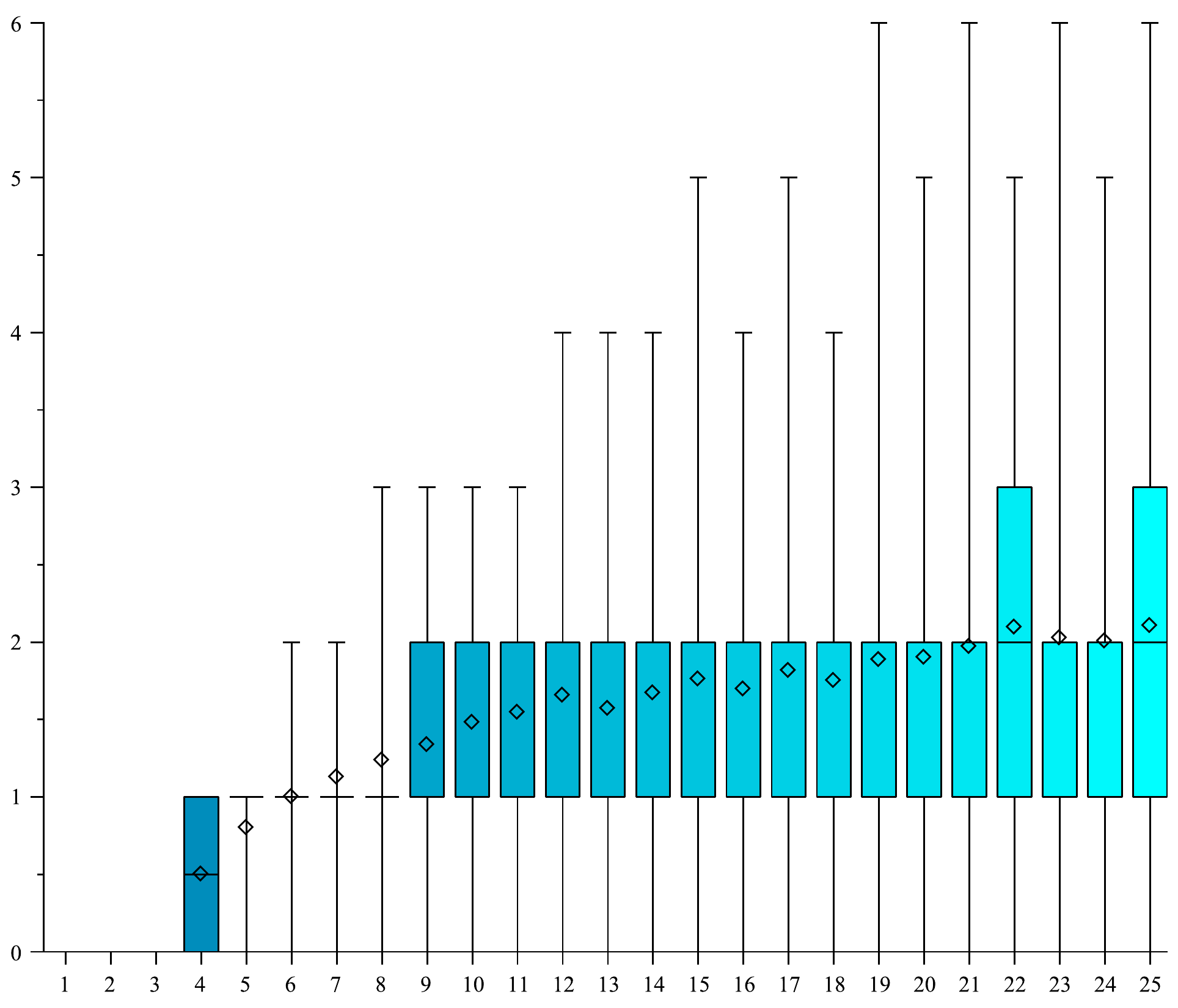}
}

\subfloat[Logarithmic fit for average number of iterations as a function of
size of original poset: $y=0.8003 \ln(x)-0.4574$, $R^2=0.9762$]{
\includegraphics[width=3.5in]{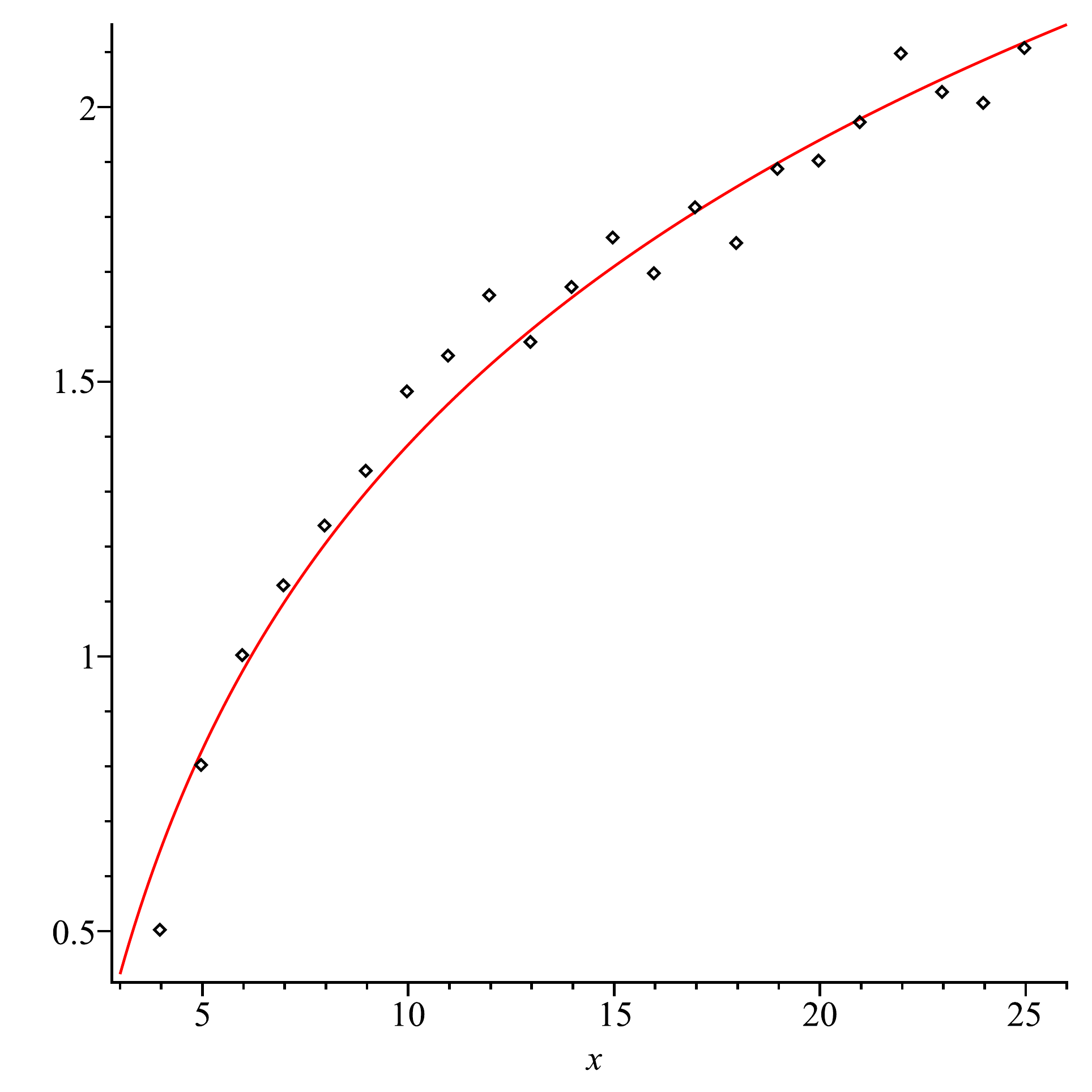}
} \caption{Statistics for the ``size of poset vs. number of iterations to
arrive at a chain" data}\label{NumItersStats}
\end{figure}

\begin{figure}
\centering
\subfloat[Box Plot]{
\includegraphics[width=3.5in]{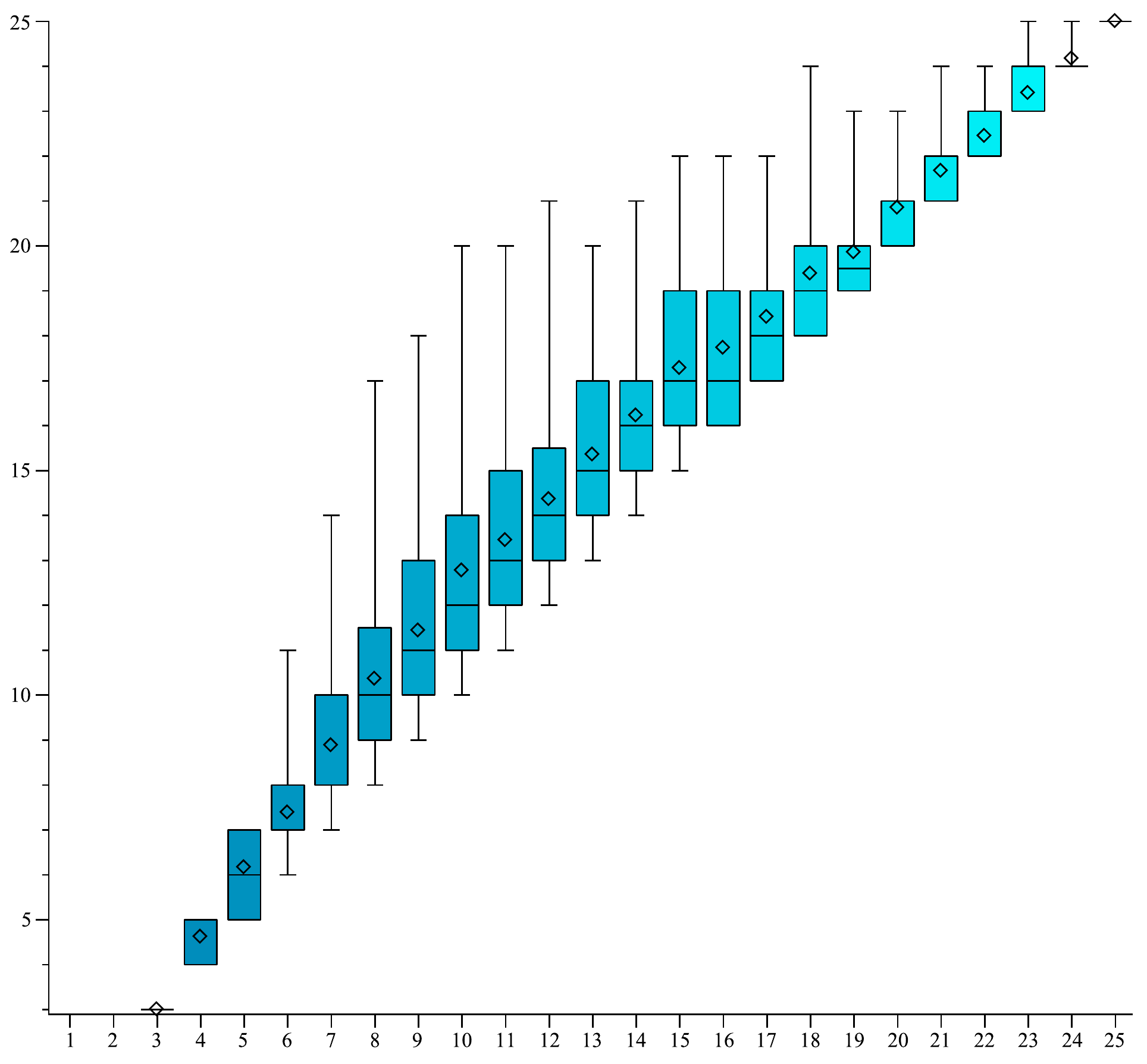}
}

\subfloat[Linear fit for height of resulting chain as a function of height of
original poset: $y=0.9463 x+2.1496$, $R^2=0.9839$]{
\includegraphics[width=3.5in]{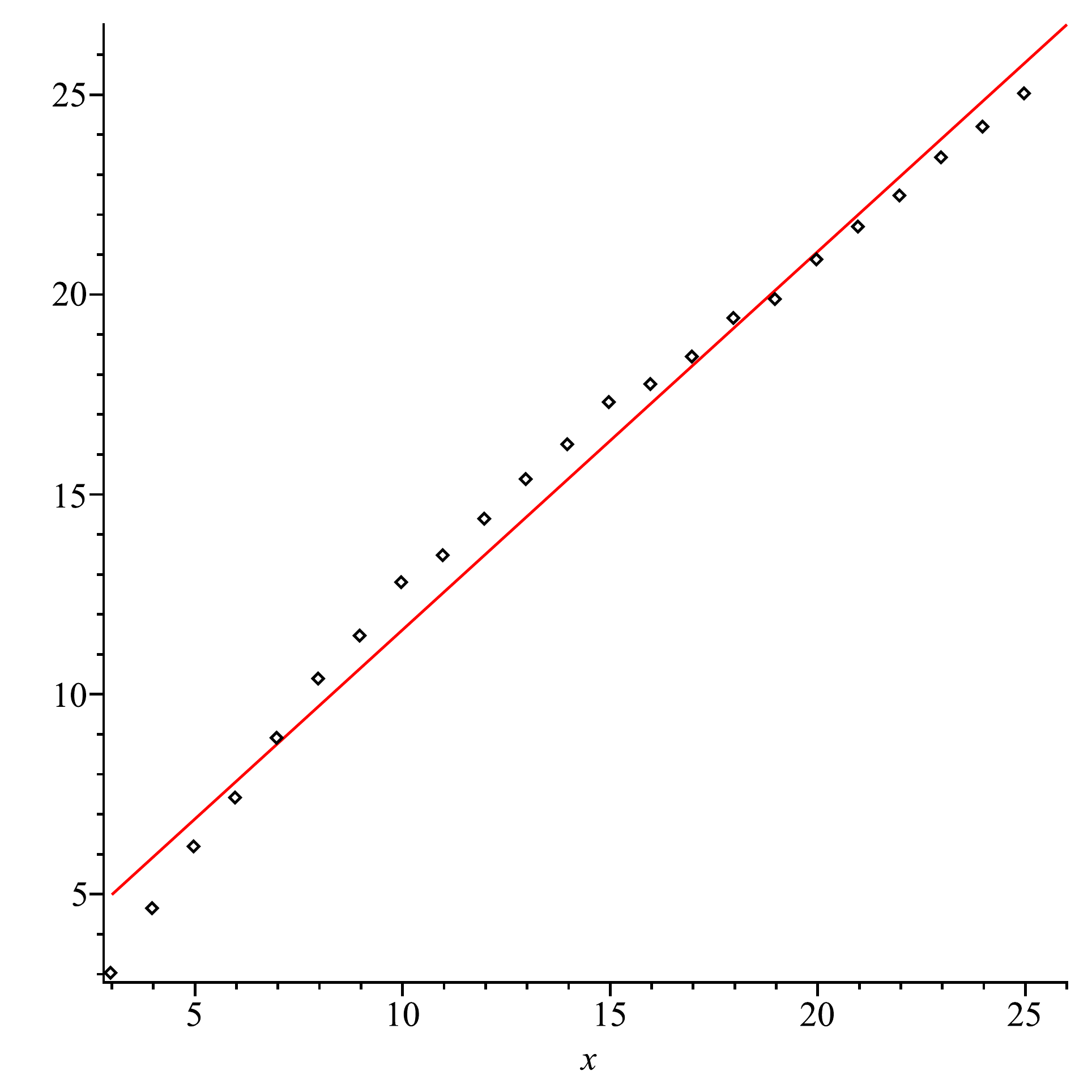}
} \caption{Statistics for the ``height of poset vs. height of resulting
chain" data}\label{HeightStats}
\end{figure}

\section{Experimental Math} \label{ExpAspect}
Throughout Section \ref{structureRpl} we mentioned the use of experiments to
make conjectures or gather data. ``Experimental mathematics'' was once an
oxymoron, however with the advent of computer algebra systems (e.g., Maple
and Mathematica) and more powerful computers, it has gained much more
acceptability. An article in the Notices of the American Mathematical Society
\cite{BaDBoJ11} discusses the many historical and current uses of
experimentation in mathematics. In a list of eight interpretations of
experimental math \cite{BaDBoJ11,BoJBaD08} we use it in our work as a tool
to: (a) gain insight and intuition, (b) test conjectures, and (c) suggest
approaches for formal proof.

\subsection{Generating Small Posets}
As we began to investigate properties of the interval rank poset, $\RplP$, we
found it necessary to construct many examples. Rather than construct
arbitrary posets we constructed the set of all bounded posets of size less
than or equal to 9 using the computer algebra system Maple on a laptop with
an Intel Core i5-2520M processor and 4GB RAM. We did this by first generating
all reflexive $\{0,1\}$-matrices of size between 1 and 7. We considered these
matrices as binary relations, i.e., if the $(i,j)^{th}$ entry is 1 then $x_i
\leq x_j$. Then we checked each matrix for antisymmetry and transitivity: if
$a_{i,j}=a_{j,i}=1$ then $i=j$, and if $a_{i,j}=a_{j,k}=1$ then $a_{i,k}=1$.
We then had to check for isomorphic copies of the same poset, i.e., checking
that permuting the rows and columns of one matrix didn't yield another matrix
in the set. Lastly, we added a top and bottom bound (a row of all 1's and a
column of all 1's). We generated all of the posets of size up to 6 rather
quickly, even checking for isomorphic copies, there are only 318 of size 6.
However, it took many days to generate the 2045 posets of size 7. Bounding
each was then a trivial step. We then had a test set of all 2450 bounded
posets of size between 3 and 9. Of course, any real-world posets would be
considerably larger than size 9, however this test set allowed us to get some
intuition about behavior of the $\Rpl$ operator.

\subsection{Generating Larger Posets}
The test set of 2450 posets was enough for us to get a feeling for the
structure of $\RplP$, and formulate the conjectures that would become
Propositions \ref{heightIncr}, \ref{widthDecr}, \ref{gradedChain}, and
\ref{EventuallyChain}. However, for the work in Section \ref{itrRpl} we
wanted to gather data for larger posets. Because the number of posets of size
$n$, $P_n$, is $2^{n^2/4+o(n^2)}$ \cite{KlDRoB75} it is difficult to get a
representative sample of posets of size $n$.

Our strategy was to generate random posets using two different algorithms,
both found in \cite{BrG1993}. The first is the random graph model. Given some
$n \in \N$ and $0\leq p\leq 1$ generate the Erd\H{o}s-R\'enyi random graph
$G_{n,p}$. This is a graph on $n$ labeled vertices where each edge, $(i,j)$,
is included with probability $p$. We then create a directed graph by
directing each edge from smaller to larger, i.e., if $\{1,2\}$ is an edge
then we direct it from 1 to 2. Notice that this graph must be acyclic since
there can be no decreasing edge. From here we take the transitive closure to
form a random partial order.

The second model is the random $k$-dimensional model. Here, we choose $k$
random linear orders (i.e., $k$ random permutations of $[n]$) and take their
intersection. By the definition of dimension, the resulting partial order has
dimension at most $k$.

\section{Conclusion and Future Work} \label{conclude}

The results reported here point towards a number of continuing efforts.

\begin{description}
\item[Canonical Strong Conjugate:] In this
paper we did not pursue the problem of finding a canonical
    conjugate order to the strong interval order (if one exists).
    While the weak interval order $\le_W$ and
    the subset order $\sub$ stand as conjugates, in \sec{intervals} we
    discussed the availability of conjugates for the strong interval order
    $\le_S$. We have identified a number of possibilities experimentally,
    and would like to cast the question more generally. To do so we need to
    consider the comparability graph of the strong order \cite{gipc}, that
    is, the graph $G=\tup{P,E}$ where $\tup{a,b} \in E \sub P^2$ if $a
    \sim_S b$. The complement of $G$ should be the comparability graph for
    any conjugate order to $\le_S$, if they exist. We also know that the
    complement of the comparability graph of $P$ must be an interval graph,
    that is, the intersection graph of the intervals $a \in P$. So we are
    brought to the question of whether an interval graph can, or must, also
    be a comparability graph. To our knowledge, this is an open question.

\item[Characterizing Iterative Interval Rank Convergence:] In \sec{iterating}
    we analyzed the iterative behavior of the $R^+$ mapping, and determined
    that this converges to a chain. A  natural open question is how
    structural properties of $\po$ can affect the convergence of $\{
    (R^+)^n(\po) \}$ to a chain. Another natural question is whether any of
    the resulting pre-order chains can be equal (even with the same sets of
    elements to be identified), for two different order relations on the set
    $\po$.

It is known that the number of labeled partial orders on $n$ elements is
asymptotically $P_n \sim 2^{\frac{n^2}{4}+\frac{3n}{2}+O(ln(n))}$ \cite{kld},
and the number of labeled preorders is asymptotically $R_n \sim
\frac{n!}{2\ln(2)^{n+1}}$ \cite{kom}. We deal more with unlabeled posets, but
note that if there are two labeled posets, $\po_1 \neq \po_2$ with
$(R^+)^m(\po_1) = (R^+)^\ell(\po_2)$ then there are two unlabeled posets with
the same property. It's clear that $P_n > R_n$ as $n$ goes to $\infty$
($2^{n^2}$ beats $n!$). Therefore, since there are asymptotically more
partial orders than preorders, we must have cases in which $\po_1 \neq \po_2$
but $R^+(\po_1) = R^+(\po_2)$. Additionally, it's not too difficult to show
that for any preorder, $E$, there is a corresponding poset, $\po$, for which
$(R^+)^k(\po) = E$ for $k=1$. So, we ask, what structural properties of
$\po_1 \neq \po_2$ would allow $(R^+)^m(\po_1) = (R^+)^\ell(\po_2)$?

\item[Measures of Gradedness:] Just as we hold that rank in posets is
    naturally and profitably extended to an interval-valued concept, so
    this work suggests that we should consider extending gradedness from a
    qualitative to a quantitative concept. A graded poset is all spindle,
    with all elements being precisely ranked with width 0, and {\it vice
    versa}. Therefore there should be a concept of posets which fail that
    criteria to a greater or lesser extent, that is, being more or less
    graded. In fact, we have sought such measures of gradedness as
    non-decreasing monotonically with iterations of $R^+$. Candidate
    measures we have considered have included the avarege interval rank
    width, the proportion of the spindle to the whole poset, and various
    distributional properties of the set of the lengths of the maximal
    chains. While our efforts have been so far unsuccessful,
    counterexamples were sometimes very difficult to find, and exploring
    the possibilities has been greatly illuminating.

\end{description}


\begin{thebibliography}{10}
\newcommand{\enquote}[1]{``#1''}
\providecommand{\url}[1]{\texttt{#1}}
\providecommand{\urlprefix}{ }

\bibitem{AiM79}
Aigner, M:  (1979) \emph{Combinatorial Theory}, Springer-Verlag, Berlin

\bibitem{BaDBoJ11}
Bailey, David~H. and Borwein, Jonathan~M.:  (2011) \enquote{Exploratory
  Experimentation and Computation}, \emph{Notices of the American Mathematical
  Society}, vol.~58(10): pp. 1410--1419

\bibitem{BaKFiP72}
Baker, KA; Fishburn, PC; and Roberts, FS:  (1972) \enquote{Partial Orders of
  Dimension 2}, \emph{Networks}

\bibitem{BoJBaD08}
Borwein, Jonathan and Bailey, David:  (2008) \emph{Mathematics by Experiment:
  Plausible Reasoning in the 21st Century}, AK Peters, Natick, MA, 2nd edn.

\bibitem{BrG1993}
Brightwell, Graham:  (1993) \emph{Surveys in Combinatorics}, chap. Models of
  Random Partial Orders, Cambridge University Press, pp. 53--82

\bibitem{DaBPrH90}
Davey, BA and Priestly, HA:  (1990) \emph{Introduction to Lattices and Order},
  Cambridge UP, Cambridge UK

\bibitem{DuBMiE1941}
Dushnik, Ben and Miller, E.~W.:  (1941) \enquote{Partially Ordered Sets},
  \emph{American Journal of Mathematics}, vol.~63(3): pp. 600--610

\bibitem{gipc}
Gilmore, PC and Hoffman, AJ:  \enquote{A Characterization of Comparability
  Graphs and of Interval Graphs}, \emph{Canadian Journal of Mathematics},
  vol.~16

\bibitem{oeis}
Inc., OEIS~Foundation:  (2011), \enquote{The Online Encyclopedia of Integer
  Sequences}, http://oeis.org, \urlprefix\url{http://oeis.org}

\bibitem{JoCHoE14} Joslyn, Cliff; Hogan, Emilie; and Pogel,
Alex: (2014) \enquote{Interval Valued Rank in Finite Ordered Sets}, submitted

\bibitem{KlDRoB75}
Kleitman, D.~J. and Rothschild, B.~L.:  (1975) \enquote{Asymptotic Enumeration
  of Partial Orders on a Finite Set}, \emph{Transactions of the American
  Mathematical Society}, vol. 205: pp. 205--220

\bibitem{kld}
Kleitman, DJ and Rothschild, BL:  (1975) \enquote{Asymptotic enumeration of
  partial orders on a finite set}, \emph{Trans. Amer. Math. Soc.}, vol. 205:
  pp. 205--220

\bibitem{kom}
Kochanski, Martin: \enquote{How Many Orders Are There?},
  \urlprefix\url{http://www.nugae.com/mathematics/bin/ordering.pdf}

\bibitem{OrO62}
Ore, Oystein:  (1962) \emph{Theory of Graphs}, vol.~38 of \emph{American
  Mathematical Society Colloquium Publications}, American Mathematical Society,
  Providence, RI

\bibitem{PaSKaV10}
Papadakis, SE and Kaburlasos, Vassilis~G:  (2010) \enquote{Piecewise-linear
  Approximation of Non-linear Models Based on
  Probabilistically/possibilistically Interpreted Intervals' Numbers (INs)},
  \emph{Information Sciences}, vol. 108:24: pp. 5060--5076

\bibitem{ScB03}
Schroder, Bernd~SW:  (2003) \emph{Ordered Sets}, Birkhauser, Boston

\bibitem{TaP96b}
Tanenbaum, Paul~J:  (1996) \enquote{Simultaneous Represention of Interval and
  Interval-Containment Orders}, \emph{Order}, vol.~13: pp. 339--350

\bibitem{TrW92}
Trotter, William~T:  (1992) \emph{Combinatorics and Partially Ordered Sets:
  Dimension Theory}, Johns Hopkins U Pres, Baltimore

\end{thebibliography}

\end{document}